\documentclass[a4paper,reqno,12pt]{amsart}
\usepackage[english]{babel}
\usepackage[T1]{fontenc}
\usepackage[utf8]{inputenc}
\usepackage{mathptmx,amsmath,amssymb,mathtools}
\usepackage{fullpage,microtype,enumitem}
\usepackage[hidelinks,hypertexnames=false]{hyperref}
\newtheorem{theorem}{Theorem}[section]
\newtheorem{definition}[theorem]{Definition}
\newtheorem{lemma}[theorem]{Lemma}
\newtheorem{proposition}[theorem]{Proposition}
\newtheorem{corollary}[theorem]{Corollary}
\newtheorem{example}[theorem]{Example}
\newcommand{\ddc}{dd^c}
\newcommand{\PSH}{\mathop{\rm PSH}\nolimits}
\newcommand{\Eloc}{\mathcal E_{\mathrm{loc}}}
\renewcommand{\Cap}{\operatorname{Cap}}
\newcommand{\np}{\mathrm{np}}
\newcommand{\pp}{\mathrm p}
\newcommand{\ind}{\mathbf 1}
\newcommand{\cc}{\Subset}
\begin{document}
\author{Thai Duong Do$^{1,2}$}
\address{$^{1}$Institute for Artificial Intelligence, VNU University of Engineering and Technology, Hanoi, Vietnam.}
\address{$^{2}$Department of Mathematics, National University of Singapore, 10 Lower Kent Ridge Road, 119076, Singapore.}
\email{dtduong@vnu.edu.vn, dtduong@nus.edu.sg}
\author{Pham Hoang Hiep$^{1}$}
\address{$^{1}$Institute for Artificial Intelligence, VNU University of Engineering and Technology, Hanoi, Vietnam.}
\email{phhiepvn@vnu.edu.vn}
\title{COMPARISON PRINCIPLES FOR LOCAL PLURISUBHARMONIC POTENTIALS\\ON COMPLEX MANIFOLDS AND APPLICATIONS}
\subjclass[2020]{32W20, 32U15}
\keywords{Monge-Amp\`ere operator, Cegrell classes, comparison principle, maximal plurisubharmonic functions}
\date{October 3, 2026}
\begin{abstract}
We prove a weighted comparison principle for local plurisubharmonic potentials in Cegrell's class on an arbitrary complex manifold under suitable boundary and gluing conditions. We obtain domination and uniqueness when the manifold admits a global psh function in $\mathcal E_{\mathrm{loc}}$ whose Monge-Amp\`ere measure has an absolutely continuous part with positive density almost everywhere. We apply these results to plurisubharmonic functions on domains in $\mathbb C^2$ whose singularities satisfy local inequalities involving $\alpha\log\|f\|$ and a Cegrell potential. For such functions, maximality is equivalent to the vanishing of the Monge-Amp\`ere measure of the current remaining after subtraction of the Siu divisorial part. Consequently, maximality is a local property in this class.
\end{abstract}
\maketitle
\tableofcontents

\section{Introduction and main results}

Comparison principles are basic tools in pluripotential theory and the study of complex Monge-Amp\`ere equations. They are used to prove domination and uniqueness and to obtain estimates for existence, stability and regularity of solutions \cite{Cegrell98,Cegrell04,Kol98,GZ07,KN19}.

The local theory rests on Bedford and Taylor's operator and capacity \cite{BT76,BT82}, Cegrell's classes and convergence results \cite{Cegrell98,Cegrell04,Cegrell08,Cegrell12}, and B\l ocki's characterizations of the domain of definition \cite{Blocki04,Blocki06}. Weighted comparison inequalities were developed by Xing \cite{Xing96} and Nguyen and Pham \cite{NP09}; for unbounded potentials, the mass on pluripolar sets requires separate treatment \cite{ACCP09,DP26}.

Applications include Ko\l odziej's $L^\infty$ estimates \cite{Kol98}, stability and H\"older regularity \cite{GZ12,DDGPKZ14}, and results on boundary measures, degenerate equations and geodesic rays \cite{Dem87,DemPali10,PS10}. Related existence and regularity results for complex Monge-Amp\`ere equations were established by Tosatti and Weinkove \cite{TW10} and Sz\'ekelyhidi and Tosatti \cite{ST11}. Comparison principles are also used to study prescribed singularities \cite{DDL18,TW12}, pluricomplex Green functions \cite[Remark 5.3]{MT19}, and equations in the Hermitian \cite{KN15,KN19,Nguyen16,LPT21} and parabolic \cite{Dang22,DoPham22} settings.

On compact K\"ahler manifolds, comparison principles with terms accounting for mass on pole sets were established by Le, Nguyen and Pham \cite[Theorem 6.1]{LNP07} and by Coman, Guedj and Zeriahi \cite[Theorem 1.9 and Definition 3.1]{CGZ08} for classes defined through local psh potentials. On bounded hyperconvex domains, \AA hag, Czy\.{z}, Lu and Rashkovskii proved domination and uniqueness in $\mathcal N(H)$ under a local order on singularities, without a weighted mass assumption \cite[Definition 3.1 and Theorem 3.5]{ACLR25}. The capacity condition in \cite{DP26} gives a different comparison of polar masses for globally defined $\mathcal E$-potentials.

Our aim is to extend the weighted inequalities of \cite{NP09,DP26} from domains in $\mathbb C^n$ to local $\mathcal E$-potentials on arbitrary complex manifolds. The currents need not have global psh potentials, but the two families must have common pluriharmonic transition functions and satisfy the boundary and gluing conditions below. The form of the inequality comes from \cite{NP09,DP26}. The main step here is the integration-by-parts formula of Lemma~\ref{lem:ibp} and the compact-support comparison of Proposition~\ref{prop:compact}: local truncations and energy estimates agree on overlaps, retain the full polar measures, and allow weights defined only near the support of the difference. The proof uses the local methods of \cite{NP09,Vu22}.

Domination and uniqueness follow if there exists $\rho\in\PSH(X)\cap\Eloc(X)$ with $(\ddc\rho)^n\ge f\,dV$ and $f>0$ almost everywhere. Lemma~\ref{lem:zeta} shows that $\zeta=e^\rho$ is locally bounded and satisfies the same positive-density condition; a local normalization of $\zeta$ then supplies the required weight. Neither continuity, an exhaustion property, nor finite total mass is assumed for $\rho$. Example~\ref{ex:blowup} illustrates the result on a manifold containing a compact curve and for currents without global potentials.

We also address local and global maximality \cite{Cegrell09,Blocki09}. Rashkovskii \cite{Rash98} studied maximal plurisubharmonic functions associated with holomorphic mappings. We prove a criterion for functions $u$ satisfying $\phi+\psi\le u$ and $u+\psi\le\phi$ locally, where $\phi=\alpha\log\|f\|$ and $\psi\in\mathcal E$. Such functions may have divisorial singularities and need not belong to $\Eloc$. After subtraction of the Siu divisorial part, the remaining current has local $\mathcal E$-potentials. Maximality is equivalent to the vanishing of its Monge-Amp\`ere measure and is therefore a local property in this class.

Let $X$ be a complex manifold of dimension $n$ with a countable atlas. We write $\PSH^-(X)$ for the nonpositive psh functions. A pluripolar set on $X$ means a locally pluripolar set. We use $d^c=(\mathrm i/2\pi)(\bar\partial-\partial)$ and $\ddc=(\mathrm i/\pi)\partial\bar\partial$. We fix a smooth positive volume form $dV$ on $X$. Its null sets agree with the Lebesgue null sets in coordinate charts.

A local psh function belongs to $\Eloc$ if, near each point, its restriction after subtraction of a constant belongs to Cegrell's class $\mathcal E$ on a smaller bounded hyperconvex coordinate neighborhood. On a bounded hyperconvex domain $\Omega$, the class $\mathcal E_0(\Omega)$ consists of bounded nonpositive psh functions with zero boundary values and finite Monge-Amp\`ere mass. The class $\mathcal F(\Omega)$ consists of decreasing limits of $\mathcal E_0(\Omega)$-functions with uniformly bounded masses, and $\mathcal E(\Omega)$ is its local counterpart; see \cite{Cegrell04}. All potentials are assumed nontrivial on each connected coordinate neighborhood. We pass to a countable locally finite refinement of the cover whenever necessary.

\begin{definition}\label{def:barrier}
Let $u=(u_j)$ be a family of local potentials on an open cover $(\Omega_j)$ such that $u_j\in\Eloc(\Omega_j)$ and
\[
u_j=u_l+h_{jl}\quad\text{on }\Omega_j\cap\Omega_l,
\]
where $h_{jl}$ are real pluriharmonic functions. A second family $v=(v_j)$ has the same transition functions if $v_j=v_l+h_{jl}$ on the overlaps.

We consider the following condition on $(u,v)$: there exist functions $h_j^k\in\PSH^-(\Omega_j)$ satisfying:
\begin{enumerate}[label=(\roman*),leftmargin=*]
\item $h_j^k\uparrow0$ almost everywhere as $k\to\infty$;
\item for every $k$ there is a compact $C_k\cc X$, independent of $j$, such that
\begin{equation}
u_j\ge v_j+h_j^k\quad\text{on }\Omega_j\setminus C_k;\label{eq:boundary}
\end{equation}
\item the functions $g_j^k=\max(u_j,v_j+h_j^k)$ satisfy
\begin{equation}
g_j^k=g_l^k+h_{jl}\quad\text{on }\Omega_j\cap\Omega_l.\label{eq:glue}
\end{equation}
\end{enumerate}
Almost everywhere refers to any smooth positive volume form. An equivalent formulation of (ii), for a compact exhaustion $K_l$ of $X$, is: for every $k$ there is $l(k)$ such that \eqref{eq:boundary} holds with $C_k=K_{l(k)}$.
\end{definition}

The differences $u_j-v_j$ agree on overlaps as locally integrable functions. Thus the two currents differ by a globally $\ddc$-exact current; in particular, they represent the same Bott--Chern class. The common-transition assumption restricts the pairs of currents covered by our results.

Theorem~\ref{th:comparison} requires all three conditions in Definition~\ref{def:barrier}. Condition (iii) is an additional hypothesis. It holds when $h_j^k=h^k|_{\Omega_j}$ for global psh functions $h^k$. Replacing $C_k$ by $\bigcup_{l=1}^{k}C_l$, we may assume that $C_k\subset C_{k+1}$. Choose open sets $D_k\cc X$ such that $C_k\subset D_k$, $D_k\cc D_{k+1}$ and $\bigcup_{k=1}^{\infty}D_k=X$.

The following formulas define global positive closed currents and their Cegrell products:
\[
T=\ddc u_j,\qquad S=\ddc v_j,\qquad R=\ddc\max(u_j,v_j).
\]
Their independence of the local representatives is proved in Lemma~\ref{lem:classes}. The sets $A=\{u<v\}$ and $E=\{u=v=-\infty\}$ are defined locally by
\[
A\cap\Omega_j=\{z\in\Omega_j:u_j(z)<v_j(z)\},\qquad
E\cap\Omega_j=\{z\in\Omega_j:u_j(z)=v_j(z)=-\infty\}.
\]
For each $j$, define $(v_j-u_j)_+:\Omega_j\to[0,+\infty]$ by
\[
(v_j-u_j)_+(z)=
\begin{cases}
\max\{v_j(z)-u_j(z),0\},& u_j(z),v_j(z)>-\infty,\\
+\infty,& u_j(z)=-\infty<v_j(z),\\
0,& v_j(z)=-\infty,
\end{cases}
\qquad z\in\Omega_j.
\]
On $\Omega_j\cap\Omega_l$, we have $u_j=u_l+h_{jl}$ and $v_j=v_l+h_{jl}$, with $h_{jl}$ finite and pluriharmonic. Thus the pole sets of the corresponding representatives coincide, and $v_j-u_j=v_l-u_l$ wherever both potentials are finite. The above conventions therefore give
\[
(v_j-u_j)_+=(v_l-u_l)_+\quad\text{on }\Omega_j\cap\Omega_l.
\]
These local Borel functions define a global nonnegative Borel function $(v-u)_+$ on $X$ by
\[
(v-u)_+\big|_{\Omega_j}=(v_j-u_j)_+.
\]
The same transition identities show that $A$ and $E$ are well defined. In particular, $(v-u)_+=0$ on the common pole set $E$; no subtraction of $-\infty$ from $-\infty$ is used. The measures used in integrals involving $(v-u)_+$ vanish on pluripolar sets. Thus the values assigned on the pole sets do not affect these integrals. The Monge-Amp\`ere mass on $E$ appears separately in the comparison inequality below.

For negative $u,v\in\mathcal E(\Omega)$ on a bounded hyperconvex coordinate domain $\Omega$, write $u\preceq_{\Cap}v$ if
\begin{equation}
\varliminf_{t\to\infty}t^n\Cap_\Omega\bigl(D\cap\{u<v-t\}\bigr)=0\quad\text{for every }D\cc \Omega.\label{eq:cap}
\end{equation}
For families of local potentials, near each point we choose a bounded hyperconvex coordinate neighborhood $\Omega\subset\Omega_j$ and a constant $C$ such that $u_j-C,v_j-C\in\mathcal E(\Omega)$. We require \eqref{eq:cap} for this pair on $\Omega$. Here the Bedford-Taylor capacity is
\[
\Cap_\Omega(E)=\sup\left\{\int_E(\ddc \phi)^n:
 \phi\in\PSH(\Omega),\ -1\le \phi\le0\right\}.
\] If $h$ is pluriharmonic, then
\[
\{u+h<v+h-t\}=\{u<v-t\}\qquad(t>0).
\]
Thus adding the same pluriharmonic function to $u$ and $v$ does not change the sets in \eqref{eq:cap}.

\begin{theorem}\label{th:comparison}
Let $X$ be an arbitrary complex manifold. Let $u,v$ be families of local $\mathcal E$-potentials with the same transition functions, satisfying conditions (i)--(iii) of Definition~\ref{def:barrier}. For $\phi_1,\ldots,\phi_n\in\PSH(X)$ with $-1\le \phi_l\le0$, put $\mu_\phi=\ddc \phi_1\wedge\cdots\wedge\ddc \phi_n$. Then
\begin{equation}\begin{aligned}
\frac1{n!}\int_A(v-u)^n\mu_\phi+\int_A(-\phi_1)S^n+\int_E(-\phi_1)R^n
\le\int_A(-\phi_1)T^n+\int_E(-\phi_1)T^n.
\end{aligned}\label{eq:full}
\end{equation}
If $u\preceq_{\Cap}v$ locally, then
\begin{equation}
\frac1{n!}\int_A(v-u)^n\mu_\phi+\int_A(-\phi_1)S^n
\le\int_A(-\phi_1)T^n.\label{eq:clean}
\end{equation}
All nonnegative integrals are extended integrals.
\end{theorem}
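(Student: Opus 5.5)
We reduce the statement to a compactly supported comparison on $X$ and then remove the truncation in two limits. Fix $k$ and put $g^k=(g_j^k)$ with $g_j^k=\max(u_j,v_j+h_j^k)$. By \eqref{eq:glue}, $g^k$ is again a family of local $\mathcal E$-potentials with the transition functions $h_{jl}$. By \eqref{eq:boundary}, $g_j^k=u_j$ on $\Omega_j\setminus C_k$. Hence $g^k$ and $u$ differ only on the compact set $C_k\subset D_k$, and the difference $g^k-u\ge0$ is a global, compactly supported Borel function. Proposition~\ref{prop:compact} should apply to the pair $(u,g^k)$ in this setting. It is proved via the integration-by-parts formula of Lemma~\ref{lem:ibp}, whose boundary terms vanish because the difference has compact support in $D_k$. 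It gives the weighted inequality \eqref{eq:full} with $v$ replaced by $g^k$, $A$ replaced by $A_k=\{u<v+h^k\}$ (the set where $g^k>u$), $S$ by $\ddc g^k$, and $E$ by $E_k=\{u=g^k=-\infty\}$.

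The plan for that compact step follows \cite{NP09}. We would truncate locally with $\max(u_j,-N)$ and $\max(g_j^k,-N)$, shifted consistently by the transition functions. We would use the bounded Bedford--Taylor theory together with the identity
\[
\int (w-u)_+^{n}\,\mu_\phi/n! \le \int(-\phi_1)\bigl((\ddc u)^n-(\ddc w)^n\bigr)\big|_{\{u<w\}},
\]
which is obtained by integrating by parts $n$ times. We would then let $N\to\infty$ using Cegrell's convergence theorems on each chart, patched with a partition of unity subordinate to the locally finite cover. The pluripolar masses are kept through the standard decomposition $\ind_{\{u<w\}}(\ddc\max(u,w))^n$. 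This produces the terms $\int_{E_k}(-\phi_1)R_k^n$ and $\int_{E_k}(-\phi_1)T^n$, where $R_k=\ddc\max(u,g^k)=\ddc g^k$.

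Next we let $k\to\infty$. On $A_k$ we have $g^k=v+h^k$, so by locality of the Cegrell Monge--Amp\`ere operator on plurifine open sets, $\ind_{A_k}(\ddc g^k)^n=\ind_{A_k}(\ddc(v+h^k))^n\ge\ind_{A_k}S^n$. Since $h^k\uparrow0$ almost everywhere and the $h^k$ are psh, the convergence holds off a pluripolar set. Therefore $A_k\uparrow A$ up to pluripolar sets, and $(g^k-u)_+\uparrow(v-u)_+$ there. Monotone convergence handles the left-hand side off $E$, since the non-pluripolar parts charge no pluripolar set. The right-hand side is bounded by the $k$-independent quantity $\int_A(-\phi_1)T^n+\int_E(-\phi_1)T^n$. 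This uses $A_k\subset A$ and $E_k\subset E$, which hold because $h^k\le0$.

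The main obstacle is the mass on $E$. We need the $E$-term on the left, $\int_{E_k}(-\phi_1)(\ddc g^k)^n$, to dominate $\int_E(-\phi_1)R^n$ in the limit. Weak convergence $\ddc\max(u,v+h^k)\to R$ does not control the measures restricted to the pluripolar set $E$. I would first try the monotonicity of the pluripolar part in Cegrell's class: $\max(u,v+h^k)\le\max(u,v)$, and the comparison of masses on $\{w=-\infty\}$ for $w_1\le w_2$ in $\mathcal E$ gives $\ind_E(\ddc g^k)^n\ge\ind_E R^n$ locally. This is the Ahag--Cegrell--Czy\.z--Pham inequality \cite{ACCP09}. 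Together with $E_k\supset E\setminus\{h^k=-\infty\}$, it should suffice.

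For \eqref{eq:clean}, the condition $u\preceq_{\Cap}v$ makes the $E$-masses cancel. Following \cite{DP26}, the capacity condition implies $\ind_E T^n=\ind_E(\ddc\max(u,v))^n=\ind_E R^n$. Indeed, a capacity estimate applied to $\max(u,v-t)$ shows that the total-mass defect of $u$ on $\{u<v-t\}$ tends to zero along a subsequence of $t$. Subtracting the two $E$-terms in \eqref{eq:full} then yields \eqref{eq:clean}.
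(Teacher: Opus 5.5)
Your architecture is the paper's: the compact comparison for $(u,g^k)$ from Proposition~\ref{prop:compact}, the bound $\ind_{A_k}(\ddc g^k)^n\ge\ind_{A_k}S^n$ from plurifine locality, polar monotonicity (b) for the mass on $E$, and monotone convergence in $k$.

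\textbf{The gap: deriving \eqref{eq:clean}.} You obtain \eqref{eq:clean} by subtracting $\int_E(-\phi_1)R^n=\int_E(-\phi_1)T^n$ from both sides of \eqref{eq:full}. That is legitimate only when this integral is finite, and the hypotheses do not ensure it. The theorem uses extended integrals and has no global mass assumption. For example, take a ball, let $u$ have infinitely many isolated logarithmic poles accumulating at the boundary with divergent weighted masses, and let $v$ be a small compactly supported perturbation of $u$. Then the $E$-terms read $+\infty\le+\infty$ while $\int_A(-\phi_1)T^n<\infty$, so \eqref{eq:clean} is never reached.

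The cancellation must happen before the limit, on $D_k$, where all masses are finite. Apply Lemma~\ref{lem:max} to $u\le g^k\le\max(u,v)$ to get $((\ddc g^k)^n)_{\pp}=(T^n)_{\pp}$. Both $(\ddc g^k)^n$ and $T^n$ are purely polar on $E_k$, so the two finite terms over $D_k\cap E_k$ coincide and cancel. This leaves $\frac1{n!}\int(g^k-u)^n\mu_\phi+\int_{A_k}(-\phi_1)(\ddc g^k)^n\le\int_{A_k}(-\phi_1)T^n\le\int_A(-\phi_1)T^n$, and monotone convergence then gives \eqref{eq:clean}. This is what the paper does.

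\textbf{Repairs needed for \eqref{eq:full}.} These do not change the architecture.
\begin{enumerate}
\item The inclusion $E_k\subset E$ is false. $g^k=-\infty$ also where $u=-\infty$ and $h_j^k=-\infty$, so in fact $E\subset E_k$ and $E_k\setminus E\subset A\setminus A_k$. Your right-hand bound survives because $A_k$ and $E_k$ are disjoint subsets of $A\cup E$. Your left-hand polar bound needs exactly $E\subset E_k$.
\item Proposition~\ref{prop:compact} gives only the signed integral $\int\psi(-\phi_1)(T^n-(\ddc g^k)^n)$. To restrict it to $A_k\cup E_k$ you must show that the contribution of $\{u=g^k>-\infty\}$ is nonpositive, i.e.\ $(\ddc g^k)^n\ge T^n$ there. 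This is the nonpolar inequality of Lemma~\ref{lem:max}, together with the fact that $T^n$ has no polar mass on that set.
\item $(\ddc(v_j+h_j^k))^n$ need not be defined, since $h_j^k$ is only negative psh. Replace $h_j^k$ by $\max(h_j^k,-l)$, apply locality to $\max(u_j,v_j+\max(h_j^k,-l))$, and let $l\to\infty$ by decreasing convergence.
\item Drop the sketch using $\max(u_j,-N)$ ``shifted consistently by the transition functions''. For nonconstant $h_{jl}$ one has $\max(u_l+h_{jl},-N)\ne\max(u_l,-N)+h_{jl}$, so these truncations do not glue. That is why Lemma~\ref{lem:ibp} works with the globally defined difference $v-u$. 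Citing Proposition~\ref{prop:compact} suffices.
\end{enumerate}
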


For domination and uniqueness, we assume that there exist $\rho\in\PSH(X)\cap\Eloc(X)$ and $f\in L^1_{\mathrm{loc}}(X,dV)$ such that
\begin{equation}
(\ddc\rho)^n\ge f\,dV,\qquad f>0\quad dV\text{-almost everywhere}.
\label{eq:rho}
\end{equation}
The measure in \eqref{eq:rho} is the full local Cegrell measure. The existence of such a positive density does not depend on the choice of $dV$. In particular, it holds if $(\ddc\rho)^n=f\,dV$ with $f>0$ almost everywhere. It also holds whenever $X$ admits a continuous strictly psh function $\rho$. Indeed, $\rho\in\Eloc(X)$, and on each sufficiently small coordinate neighborhood there is $\epsilon>0$ such that $\rho-\epsilon|z|^2$ is psh. Positivity and multilinearity of bounded Bedford--Taylor products give
\[
(\ddc\rho)^n\ge\epsilon^n(\ddc|z|^2)^n.
\]
Thus the absolutely continuous part of $(\ddc\rho)^n$ has positive density almost everywhere with respect to $dV$. No finite total mass assumption on $(\ddc\rho)^n$ is required.

On a connected compact manifold, every global psh function is constant. Hence the first term in \eqref{eq:full} vanishes there, and \eqref{eq:rho} cannot hold. The domination and uniqueness results below apply to noncompact manifolds satisfying \eqref{eq:rho}. This condition allows compact complex curves, as Example~\ref{ex:blowup} shows.

\begin{theorem}\label{th:dom}
Suppose that $X$ satisfies \eqref{eq:rho}. Let $u,v$ satisfy the hypotheses of Theorem~\ref{th:comparison}. Assume that $T^n\le S^n$ on $A$ and
\begin{equation}
(T^n)_{\pp}=(R^n)_{\pp}.\label{eq:balance}
\end{equation}
Then $u_j\ge v_j$ on every $\Omega_j$. In particular, \eqref{eq:balance} may be replaced by the local condition $u\preceq_{\Cap}v$. No finite global mass assumption is needed.
\end{theorem}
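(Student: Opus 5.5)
We will apply the weighted inequality \eqref{eq:full} of Theorem~\ref{th:comparison} with a weight built from $\rho$, and show that the assumptions force $\int_A(v-u)^n\mu_\phi=0$ for a measure $\mu_\phi$ that charges every set of positive $dV$-measure.

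\emph{Step 1: the mass terms cancel.} Split every Monge--Amp\`ere measure into its non-pluripolar and pluripolar parts. The set $E$ is pluripolar, so on $E$ only the pluripolar parts contribute. Assumption \eqref{eq:balance} then gives
\[
\int_E(-\phi_1)R^n=\int_E(-\phi_1)T^n .
\]
The hypothesis $T^n\le S^n$ on $A$ gives
\[
\int_A(-\phi_1)T^n\le\int_A(-\phi_1)S^n .
\]
All these quantities are finite: $\mu_\phi$ and the Cegrell measures are locally finite and $-\phi_1\in[0,1]$. For this we will take $\phi_1$ to be compactly supported in the sense that $-\phi_1$ vanishes outside a compact set, or we will localize via the compact-support version, Proposition~\ref{prop:compact}. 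Subtracting in \eqref{eq:full} then yields
\[
\int_A(v-u)^n\mu_\phi=0 .
\]
In the capacity case, \eqref{eq:clean} gives the same conclusion directly, without \eqref{eq:balance}.

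\emph{Step 2: construction of the weight.} By Lemma~\ref{lem:zeta}, $\zeta=e^\rho$ is psh and locally bounded, and $(\ddc\zeta)^n\ge g\,dV$ with $g>0$ almost everywhere. Fix a relatively compact open set $D\cc X$. We want global psh functions $\phi_l$ with $-1\le\phi_l\le0$ such that $\mu_\phi\ge c\,\ind_D\,g\,dV$ with $c>0$. The natural choice is $\phi_l=\phi=(\zeta-M)/M'$, normalized with constants $M,M'$ that make $\phi$ lie in $[-1,0]$ on a neighborhood of $\overline D$. Globally, $\zeta$ need not be bounded, so we instead take
\[
\phi=\max\Bigl(\frac{\zeta-M}{M'},-1\Bigr)
\]
and cap it from above. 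Here the local normalization of $\zeta$ mentioned in the introduction enters: we use $\min$ with a constant, or rather choose $M=\sup_{\overline{D'}}\zeta$ and replace $\zeta$ by $\max(\zeta,M)$-type gluing so that the function is psh on $X$ and equal to an affine function of $\zeta$ on $D$. Concretely, take
\[
\psi=\max\bigl(\zeta,\ M+\epsilon\bigr)-(M+\epsilon) \le 0 \quad\text{near } D \ (\text{where } \zeta<M+\epsilon).
\]
This fails because the result is constant there. So we use the boundedness differently: set
\[
\phi=\frac{\max(\zeta,a)-b}{b-a},
\]
which is psh with values in $[-1,0]$ only where $\zeta\le b$. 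The main obstacle is exactly this step: producing a global psh function bounded in $[-1,0]$ whose Monge--Amp\`ere measure agrees with a multiple of $(\ddc\zeta)^n$ on $D$, when $\zeta$ is unbounded at infinity.

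My first attempt will exploit that Theorem~\ref{th:comparison} (through Proposition~\ref{prop:compact}) only needs the weight near the support of the difference. Replace the global requirement $-1\le\phi\le0$ by the requirement on a neighborhood of $\overline{D_k}$, and apply the compact-support comparison to the truncated pair $(u_j,g_j^k)$, whose difference is supported in $C_k$. Then let $k\to\infty$.

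\emph{Step 3: conclusion.} With such a weight, $\mu_\phi\ge c^n(\ddc\zeta)^n\ge c^n g\,dV$ on $D$. Step 1 gives $\int_{A\cap D}(v-u)^n g\,dV=0$. Since $(v-u)>0$ on $A$ and $g>0$ almost everywhere, $A\cap D$ is $dV$-null. Exhausting $X$ by such sets $D$, $A$ is Lebesgue-null in every chart, so $u_j\ge v_j$ almost everywhere on $\Omega_j$. Psh functions that satisfy an inequality almost everywhere satisfy it everywhere (compare sub-mean values), so $u_j\ge v_j$ on every $\Omega_j$.
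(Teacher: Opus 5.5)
Your main line does not go through. Theorem~\ref{th:comparison} requires weights $\phi_l\in\PSH(X)$ with $-1\le\phi_l\le0$ on all of $X$, and such weights need not carry any Monge--Amp\`ere mass. For example, $X=\mathbb C^n$ satisfies \eqref{eq:rho} with $\rho=|z|^2$, but every psh function on $\mathbb C^n$ that is bounded above is constant. So $\mu_\phi=0$ for every admissible choice, and \eqref{eq:full} gives no information on $A$.

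Your proposed escape via a weight with $-\phi_1$ vanishing outside a compact set is impossible for the same reason. By the maximum principle, a nonpositive psh function vanishing on an open subset of a connected manifold vanishes identically. Capping $(\zeta-M)/M'$ from above destroys plurisubharmonicity, as you noticed.

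Step~1 has a second, independent problem. Subtracting in \eqref{eq:full} requires $\int_A(-\phi_1)T^n$ and $\int_E(-\phi_1)T^n$ to be finite. However, $A$ and $E$ need not be relatively compact, and no finite global mass is assumed, so these integrals may be $+\infty$, and then the inequality is empty. The same objections apply to your use of \eqref{eq:clean} in the capacity case.

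Your fallback in Step~2 is the right idea, and it is what the paper does, but you do not carry it out. The cancellation of Step~1 also does not transfer to it, because that cancellation is written for the limit objects $A$, $E$, $R^n$. What you need at fixed $k$ is the global measure inequality $T^n\le(\ddc g_j^k)^n$ on $X$, including polar parts (Lemma~\ref{lem:positive}):
\begin{itemize}
\item The non-polar part follows from \eqref{eq:nonpolar}, since $A_k\subset A$ and $T^n\le S^n$ on $A$.
\item The polar part follows from \eqref{eq:polar}, namely $(T^n)_{\pp}\ge((\ddc g^k)^n)_{\pp}\ge(R^n)_{\pp}$, combined with \eqref{eq:balance}.
\end{itemize}
Then apply Proposition~\ref{prop:compact} to the pair $u\le g^k$, which agree outside $C_k$ (this is Lemma~\ref{lem:compact-dom}). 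Take all weights equal to the locally normalized $\phi=\zeta/M-1$ on a relatively compact neighborhood $D$ of $C_k$, with $M>\max\{1,\sup_D\zeta\}$. The right side is then $\le0$, so $\int_D(g^k-u)^n(\ddc\zeta)^n=0$. Hence $g^k=u$ almost everywhere, then everywhere, that is, $u_j\ge v_j+h_j^k$. Only at this point let $k\to\infty$, using $h_j^k\uparrow0$ almost everywhere.

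For the capacity alternative, the paper does not use \eqref{eq:clean}. It derives \eqref{eq:balance} from $u\preceq_{\Cap}v$ by Lemma~\ref{lem:max} applied with $h=\max(u_j,v_j)$, and then runs the same argument.
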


\begin{theorem}\label{th:unique}
Suppose that $X$ satisfies \eqref{eq:rho}. Let $u,v$ be families of local $\mathcal E$-potentials with the same transition functions, and suppose that $T^n=S^n=\mu$. Assume that the condition in Definition~\ref{def:barrier} holds for both $(u,v)$ and $(v,u)$. If
\[
(R^n)_{\pp}=\mu_{\pp},
\]
then $u=v$. It suffices that every point have a coordinate neighborhood on which either $u\preceq_{\Cap}v$ or $v\preceq_{\Cap}u$; the direction may vary from one neighborhood to another.
\end{theorem}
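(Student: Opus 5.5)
The plan is to apply Theorem~\ref{th:dom} twice, once to $(u,v)$ and once to $(v,u)$, and then combine the two inequalities $u_j\ge v_j$ and $v_j\ge u_j$.

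\textbf{Setup.} The sets $A$, $E$ and the current $R=\ddc\max(u_j,v_j)$ are symmetric in $u$ and $v$, except that $A=\{u<v\}$ becomes $A'=\{v<u\}$. The hypothesis $T^n=S^n=\mu$ gives $T^n\le S^n$ on $A$ and $S^n\le T^n$ on $A'$ trivially. The barrier conditions (i)--(iii) hold for both ordered pairs by assumption. The balance condition \eqref{eq:balance} for $(u,v)$ reads $(T^n)_{\pp}=(R^n)_{\pp}$. For $(v,u)$ it reads $(S^n)_{\pp}=(R^n)_{\pp}$. Both follow from $(R^n)_{\pp}=\mu_{\pp}$, since $(T^n)_{\pp}=(S^n)_{\pp}=\mu_{\pp}$. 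Hence Theorem~\ref{th:dom} gives $u_j\ge v_j$ and $v_j\ge u_j$ on every $\Omega_j$, so $u=v$.

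\textbf{Local capacity version.} Here we must verify $(R^n)_{\pp}=\mu_{\pp}$ from the assumption that each point has a neighbourhood where $u\preceq_{\Cap}v$ or $v\preceq_{\Cap}u$. Equality of measures is local, so we fix such a neighbourhood $\Omega$; by the symmetry of $R$ and $\mu$ we may assume $u\preceq_{\Cap}v$ there. The route I would take is the one used in Theorem~\ref{th:dom} to replace \eqref{eq:balance} by $u\preceq_{\Cap}v$. Its proof shows that, on such a neighbourhood,
\[
\ind_E(T^n)=\ind_E(R^n).
\]
Off $E$, the polar parts compare through the plurifine locality of Cegrell products. On the plurifine open set $\{u>v\}$ we have $R^n=T^n$, and on $\{u<v\}$ we have $R^n=S^n$. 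The set $\{u=v\}\setminus E$ needs separate care. On it, the pluripolar parts can be handled via $\max(u,v+\epsilon)$ as $\epsilon\downarrow0$, or more simply by noting that on the pluripolar set $\{u=-\infty\}\setminus E$ we have $v>-\infty=u$. Since $u$ and $v$ have the same pole-mass behaviour, a pluripolar set $P\subset\{u=-\infty\}\setminus E$ lies in $\{u<v\}$. There $R^n=S^n=\mu$, and near such points $\max(u,v)$ behaves like $v$ in the plurifine sense. Similarly, on $\{v=-\infty\}\setminus E$ we have $R^n=T^n=\mu$. Since $\mu$ charges no pluripolar set outside the poles of $u$ or $v$, the polar parts agree off $E$, and on $E$ the capacity condition gives equality. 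Thus $(R^n)_{\pp}=\mu_{\pp}$ on $\Omega$, hence globally.

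\textbf{Main obstacle.} The hard step is extracting $\ind_E T^n=\ind_E R^n$ from the local capacity condition when the direction of $\preceq_{\Cap}$ varies between neighbourhoods. I would first try to cite the local statement established inside the proof of Theorem~\ref{th:dom}. Failing that, I would reprove it from the comparison inequality \eqref{eq:clean} localized with $\phi_1$ supported near the point, using $T^n=S^n$ to cancel the $A$-terms and leave only the $E$-terms.
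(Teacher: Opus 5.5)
Your proof is correct and follows the paper's: Theorem~\ref{th:dom} applied to $(u,v)$ and to $(v,u)$, and for the capacity alternative Lemma~\ref{lem:max} with $h=\max(u_j,v_j)$ gives, on each neighbourhood, $(R^n)_{\pp}=(T^n)_{\pp}$ or $(R^n)_{\pp}=(S^n)_{\pp}$, both equal to $\mu_{\pp}$. That lemma already gives equality of the whole polar parts, so your plurifine analysis off $E$ is unnecessary, and the varying direction you flag as the main obstacle is harmless precisely because $T^n=S^n$.
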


For the application, let $u\in\PSH(\Omega)$, where $\Omega\subset\mathbb C^2$. We assume that near every point there are a bounded hyperconvex coordinate neighborhood $D$, a function $\psi\in\mathcal E(D)$ and a function $\phi=\alpha\log\|f\|$, where $\alpha>0$ and $f$ is a holomorphic mapping not identically zero, such that
\begin{equation}
\phi+\psi\le u,\qquad u+\psi\le \phi.\label{eq:CD}
\end{equation}
The choice $\phi=0$ is also allowed. The inequalities in \eqref{eq:CD} are understood as inequalities of psh functions, including their extended values.

Write the Siu decomposition of $\ddc u$ as
\[
\ddc u=S_u+T_u,\qquad S_u=\sum_{j=1}^{N_u}\nu_{Z_j}(u)[Z_j],
\]
where $Z_j$, $1\le j\le N_u$, are the distinct irreducible analytic hypersurfaces with positive generic Lelong number, $N_u\in\{0,1,2,\ldots\}\cup\{\infty\}$, and $[Z_j]$ is the current of integration over $Z_j$. If $N_u=0$, the sum is zero. The current $T_u$ is the positive closed residual current. For functions satisfying \eqref{eq:CD} locally in dimension two, the sum defining $S_u$ is locally finite and $T_u$ has local $\mathcal E$-potentials, as established below.

\begin{theorem}\label{th:CD}
Let $\Omega\subset\mathbb C^2$ be a domain and let $u\in\PSH(\Omega)$ satisfy \eqref{eq:CD} locally. Choose local psh potentials $h_j$ of $S_u$ on coordinate neighborhoods $D_j$, and let $f_j$ be the psh representative of the distribution $u-h_j$. Then $f_j\in\Eloc(D_j)$, and the local Cegrell measures
\begin{equation}
T_u^2\big|_{D_j}:=(\ddc f_j)^2
\label{eq:residualMA}
\end{equation}
define a positive Radon measure on $\Omega$, independent of the cover and the potentials $h_j$. The following conditions are equivalent:
\begin{enumerate}[label=(\roman*),leftmargin=*]
\item $u$ is maximal on $\Omega$;
\item $u$ is locally maximal on $\Omega$;
\item $T_u^2=0$ on $\Omega$, or equivalently $(\ddc f_j)^2=0$ on every $D_j$.
\end{enumerate}
\end{theorem}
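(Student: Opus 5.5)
We plan to establish the structural statements first and then prove the cycle of implications (i)$\Rightarrow$(ii)$\Rightarrow$(iii)$\Rightarrow$(i).

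\emph{Structure of $u$.} Fix a point and a neighborhood $D$ with $\phi=\alpha\log\|f\|$ and $\psi\in\mathcal E(D)$ as in \eqref{eq:CD}. Since $\psi$ has zero Lelong numbers outside a discrete set in dimension two, and $\mathcal E$ functions have no divisorial part (their Lelong numbers vanish generically on any curve, because $(\ddc\psi)^2$ is well defined and locally finite), the two inequalities give $\nu_x(u)=\nu_x(\phi)$ for generic $x$ on each curve. Hence the divisorial part of $u$ coincides with that of $\phi$. The latter is $\alpha\sum m_k[W_k]$, where the $W_k$ are the components of the common zero divisor of $f$. This set is locally finite, so $S_u$ is locally finite and $h_j$ exists. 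Write $f=g\tilde f$ with $g$ the gcd of the components of $f$. Then $\phi=\alpha\log|g|+\tilde\phi$, where $\tilde\phi=\alpha\log\|\tilde f\|$ has isolated poles and therefore lies in $\mathcal E$ near each point of $\mathbb C^2$. Subtracting $\alpha\log|g|$, which differs from $h_j$ by a pluriharmonic function, we get $\tilde\phi+\psi\le f_j+\mathrm{ph}$, so $f_j$ is bounded below by a sum of $\mathcal E$ functions. Cegrell's monotonicity then gives $f_j\in\Eloc(D_j)$. On overlaps the $f_j$ differ by pluriharmonic functions, and $(\ddc(f+h))^2=(\ddc f)^2$ for pluriharmonic $h$ (Lemma~\ref{lem:classes}). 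This yields a Radon measure that is independent of the choices.

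\emph{Implications.} (i)$\Rightarrow$(ii) is trivial. For (ii)$\Rightarrow$(iii), we work locally. Away from the curves $W_k$, $h_j$ is pluriharmonic, so $f_j$ is locally maximal and $(\ddc f_j)^2=0$ off $\bigcup W_k$, by Błocki's characterization of maximality in $\Eloc$. It remains to see that $(\ddc f_j)^2$ puts no mass on $W_k$. The upper inequality $u+\psi\le\phi$ gives $f_j\le\tilde\phi-\psi+\mathrm{ph}$. We intend to use the comparison principle of Theorem~\ref{th:comparison} (or its local version, Proposition~\ref{prop:compact}) with $v=f_j$ and competitor $\tilde\phi+\psi$. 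For this we would compare the maximal $u$ with $h_j+w$, where $w$ is the maximal envelope of $f_j$, and apply Theorem~\ref{th:dom} on a small hyperconvex ball. This is where we expect the difficulty: along generic points of $W_k$, maximality of $u$ must force $(\ddc f_j)^2=0$, even though $u$ itself is not in $\Eloc$.

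First attempt at the obstacle: near a regular point of $W_k$, choose coordinates with $W_k=\{z_1=0\}$ and $h_j=c\log|z_1|$. Then compare $u$ with $c\log|z_1|+\tilde w$, where $\tilde w\in\mathcal E$ solves $(\ddc\tilde w)^2=0$ with boundary values of $f_j$ (maximal replacement). Domination from Theorem~\ref{th:dom} in $\mathcal E$ gives $f_j\le\tilde w$, and maximality of $u$ gives the reverse inequality. Hence $f_j=\tilde w$ and $(\ddc f_j)^2=0$. Singular points of $\bigcup W_k$ form a discrete set. The measure there is handled by the capacity condition $f_j\preceq_{\Cap}\tilde w$, which follows from the two-sided bounds by $\tilde\phi\pm\psi$.

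For (iii)$\Rightarrow$(i), let $w\in\PSH(G)$ with $G\Subset\Omega$ and $w\le u$ near $\partial G$. Subtract $h_j$ locally to get a family $(w-h_j)$ with the same transitions as $(f_j)$. The boundary condition of Definition~\ref{def:barrier} holds with $h^k=0$, and gluing holds because the max is taken of globally defined $u,w$. Apply Theorem~\ref{th:dom} on $G$, which satisfies \eqref{eq:rho} via $|z|^2$, with $T^2=(\ddc f_j)^2=0\le S^2$, after first truncating $w$ as $\max(w,u)$ so that its local potentials are in $\Eloc$. The polar balance \eqref{eq:balance} follows because $T^2=0$ has no pluripolar part and $R^2\le$ masses controlled by $T^2$ on the pole set. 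This gives $w\le u$ on $G$, i.e.\ $u$ is maximal.
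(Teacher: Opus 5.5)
Your structural part and (i)$\Rightarrow$(ii) agree with the paper, and the overall cycle is right. However, (iii)$\Rightarrow$(i) has a genuine gap.

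To apply Definition~\ref{def:barrier} and Theorem~\ref{th:dom}, the local functions $\max(w,u)-h_j$ must be plurisubharmonic, i.e.\ $\ddc\max(w,u)\ge S_u$ on $G$. Truncating by $u$ only gives the lower bound $\max(w,u)-h_j\ge f_j$. That bound yields $\Eloc$ \emph{once} plurisubharmonicity is known, but it does not give plurisubharmonicity. Since enlarging a function can only decrease Lelong numbers, the competitor might a priori lose part of the divisor.

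The missing ingredient is the paper's Lemma~\ref{lem:divcompetitor}:
\begin{itemize}
\item Each irreducible component $Z$ of $S_u|_G$ is a closed curve in $G$, hence not compact, because a domain in $\mathbb C^2$ contains no compact curves.
\item So $Z$ meets $G\setminus K$, where the competitor coincides with $u$, and the generic Lelong numbers along $Z$ agree.
\item The inequality $\max(w,u)\ge u$ excludes new divisorial components.
\end{itemize}
Only then do you get psh functions $g_j$ with $\max(w,u)=h_j+g_j$, $g_j\ge f_j$, $g_j\in\Eloc$, the same transitions as $(f_j)$, and $g_j=f_j$ off $K$. This step is not a formality. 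On the blowup of Example~\ref{ex:blowup}, $\max(\log|\pi|,-1)$ agrees with $\log|\pi|$ outside a compact set but has lost the divisor $E$. With this lemma in place, your use of Theorem~\ref{th:dom} goes through; the paper uses the simpler Corollary~\ref{cor:zero}.

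In (ii)$\Rightarrow$(iii), the difficulty you anticipate is illusory, and your step at singular points of $\bigcup W_k$ is unjustified. Two-sided bounds by $\tilde\phi\pm\psi$ give no decay of $t^2\Cap\{f_j<\tilde w-t\}$ when $\tilde\phi$ or $\psi$ carries polar mass, and $\tilde\phi$ does have Dirac masses at the isolated zeros of $\tilde f$. Moreover, by item (c) of the preliminaries, $f_j\preceq_{\Cap}\tilde w$ with $\tilde w$ maximal would already force the polar part of $(\ddc f_j)^2$ to vanish near the point. That is exactly what you want to prove, so the argument is circular.

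No regular/singular split is needed, because $h_j$ is psh on all of $D_j$. Take a ball $D_0$ on which $u$ is maximal, and $g\in\PSH(D_0)$ with $g\ge f_j$ and $g=f_j$ off a compact subset. Then $h_j+g$ is a psh compact upward perturbation of $u$, so $h_j+g=u$. Hence $g=f_j$ almost everywhere, and then everywhere. Thus $f_j$ is maximal on $D_0$, and \eqref{eq:blocki} gives $(\ddc f_j)^2=0$ there, polar mass included. This is the paper's argument.

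Your envelope comparison reduces to it if you take $\tilde w$ equal to $f_j$ outside a compact set. Merely prescribing boundary values $f_j$ would not fit the compact-perturbation definition of maximality. With that choice, the domination step giving $f_j\le\tilde w$ is automatic.
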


The measure in \eqref{eq:residualMA} is the full Cegrell measure, including any mass on pluripolar sets. The criterion can be written locally as
\[
\left(\ddc\left(u-\sum_{l=1}^{N}\alpha_l\log|g_l|\right)\right)^2=0,
\qquad S_u=\sum_{l=1}^{N}\alpha_l[\{g_l=0\}],
\]
where the $g_l$ are reduced local defining functions for the irreducible divisorial components and the expression in parentheses denotes its psh representative. The domain in Theorem~\ref{th:CD} need not be pseudoconvex. The proof is given in Section~\ref{sec:CD}.

\section{Preliminaries}
For a local Cegrell measure $\mu$, write $\mu=\mu_{\np}+\mu_{\pp}$, where $\mu_{\np}$ vanishes on pluripolar sets and $\mu_{\pp}$ is carried by a pluripolar set. For $\mu=(\ddc u)^n$, the decomposition theorem gives
\[
\mu_{\pp}=\ind_{\{u=-\infty\}}\mu,\qquad
\mu_{\np}=\ind_{\{u>-\infty\}}\mu.
\]
For $u\in\mathcal F(\Omega)$, these identities follow from \cite[Theorem 5.11]{Cegrell04}. For $u\in\mathcal E(\Omega)$ and $D\cc\Omega$, choose a function in $\mathcal F(\Omega)$ equal to $u$ on a neighborhood of $\overline D$, using the second remark after Definition 4.6 in \cite{Cegrell04}. Ordinary locality then gives the identities on $D$. This proves the assertion for $\Eloc$ without a global mass assumption. The decompositions agree on overlaps and define global measures. We recall the following results:
\begin{enumerate}[label=(\alph*),leftmargin=*]
\item Mixed Monge-Amp\`ere products are defined by decreasing $\mathcal E_0$-approximation in \cite[Theorem 4.2]{Cegrell04}. Local $\mathcal F$-representatives are provided by the second remark after Definition 4.6 there. Convergence with a bounded psh weight for sequences in $\mathcal E$ lying above their limits is proved in \cite[Lemma 3.2]{Cegrell08}. Lemma~\ref{lem:weighted} below records the local form used here.
\item If $u\le v$ are local $\mathcal E$-potentials, then
\[
\bigl((\ddc v)^n\bigr)_{\pp}\le\bigl((\ddc u)^n\bigr)_{\pp}.
\]
This is the Cegrell-class form of Demailly's comparison of residual Monge-Amp\`ere measures \cite[Remark 3.6]{Dem87}. The required statement and its mixed version are proved in \cite[Lemma 4.1 and the first following remark]{ACCP09}. We also use the preceding identification of the polar part.
\item The scalar comparison on pluripolar sets, in the liminf form explained below, gives, for local $u\preceq_{\Cap}v$,
\[
\bigl((\ddc u)^n\bigr)_{\pp}\le\bigl((\ddc v)^n\bigr)_{\pp}.
\]

\item Plurifine locality of bounded Bedford--Taylor products relative to a closed positive current is proved in \cite[Theorem 2.9]{Vu21}. Locality for maxima in Cegrell's classes is proved in \cite[Theorem 4.1]{NP09}. The bounded integration-by-parts formulas and Cauchy--Schwarz inequality are due to Bedford and Taylor \cite[Theorem 2.7, Corollaries 2.8--2.9 and Proposition 3.3]{BT82}. The weighted comparison estimates with a fixed closed positive current are proved in \cite[Lemmas 3.2--3.3]{NP09}.
\end{enumerate}

For (c), we use the scalar comparison in \cite[Lemmas 3.2--3.4]{DP26}. We recall why the liminf condition in \eqref{eq:cap} suffices. The estimate for $u,v\in\mathcal F$ has the form
\[
(1-\epsilon)^n\int_{\{u<-2t/\epsilon\}}(\ddc u)^n
\le \int_{\{v<-t\}}(\ddc v)^n
+\left(\frac2\epsilon-1\right)^n t^n
\Cap_\Omega\{u\le v-t\}.
\]
The estimate holds for $0<\epsilon<1$ and $t>0$. Letting $t$ tend to infinity along a sequence making the last term vanish, and then letting $\epsilon\downarrow0$, gives the inequality between the total polar masses. To pass from strict to non-strict sublevel sets, put $E=\{u=v=-\infty\}$. For every $t>0$,
\[
\{u\le v-2t\}\subset\{u<v-t\}\cup E.
\]
Since $E$ is pluripolar and has zero Bedford--Taylor capacity, for every $D\cc\Omega$ we have
\[
\Cap_\Omega\bigl(D\cap\{u\le v-2t\}\bigr)
\le\Cap_\Omega\bigl(D\cap\{u<v-t\}\bigr).
\]
Thus, if $(t_k)$ is a sequence for which the expression in \eqref{eq:cap} tends to zero, the corresponding non-strict capacity term tends to zero along $(2t_k)$.

We give the localization argument. Let $u,v\in\mathcal E(\Omega)$ satisfy \eqref{eq:cap}, and let $E\cc\Omega$ be compact and pluripolar. Choose $E\cc D_0\cc D\cc\Omega$. Lemma 4.3 of \cite{ACCP09} gives a function $f\ge u$ such that
\[
(\ddc f)^n=\ind_E(\ddc u)^n.
\]
In that construction, $f\ge u_{\ind_{D_0}}$, in the notation of Definition 4.2 of \cite{ACCP09}. Property (4) following that definition gives $u_{\ind_{D_0}}\in\mathcal F(\Omega)$ because $D_0\cc\Omega$. Thus $f\in\mathcal F(\Omega)$ by Lemma~\ref{lem:classes} below. Set
\[
h=\left(\sup\{\psi\in\PSH^-(\Omega):\psi\le f
\text{ on }\Omega\setminus\overline{D_0}\}\right)^*.
\]
This is the envelope construction of \cite[Section 2]{Cegrell08}. Since $f\le h\le0$, Lemma~\ref{lem:classes} gives $h\in\mathcal F(\Omega)$. The definition gives $h=f$ on $\Omega\setminus\overline{D_0}$. The envelope is maximal on $D_0$: a psh function above $h$ and equal to $h$ outside a compact subset of $D_0$ is still admissible in the defining supremum. Hence $(\ddc h)^n=0$ on $D_0$ by \cite[Proposition 2.2]{Blocki04}, which is stated in arbitrary dimension for the domain of definition of the Monge-Amp\`ere operator. By (b), the polar part of $(\ddc h)^n$ is dominated by $(\ddc f)^n$, which is supported in $E\subset D_0$. Thus $h\in\mathcal F^a(\Omega)$, where $\mathcal F^a$ denotes the functions in $\mathcal F$ whose Monge-Amp\`ere measures vanish on pluripolar sets. Proposition 3.4 of \cite{CKZ05} gives
\[
\lim_{t\to\infty}t^n\Cap_\Omega\{h<-t\}=0.
\]
Put $g=\max(f,v)$. Since $f\le g\le0$, we have $g\in\mathcal F(\Omega)$ by Lemma~\ref{lem:classes}. For $t>0$,
\[
\{f<g-t\}\subset\bigl(D\cap\{u<v-t\}\bigr)\cup\{h<-t\}.
\]
Indeed, on the set on the left we have $g=v$ and $u\le f<v-t$; outside $D$ we also have $f=h$ and $g\le0$. Therefore
\[
t^n\Cap_\Omega\{f<g-t\}
\le t^n\Cap_\Omega\bigl(D\cap\{u<v-t\}\bigr)
+t^n\Cap_\Omega\{h<-t\}.
\]
Choose $t_k\to\infty$ so that the first term on the right tends to zero, using \eqref{eq:cap}. The second term tends to zero along every such sequence. The strict-to-nonstrict inclusion above then gives
\[
(2t_k)^n\Cap_\Omega\{f\le g-2t_k\}\longrightarrow0.
\]
Apply the scalar $\mathcal F$ estimate to $f,g$ along $2t_k$, and then let $\epsilon\downarrow0$. Since $g\ge f$ and $g\ge v$, (b) shows that the polar part of $(\ddc g)^n$ is supported in $E$ and dominated by that of $(\ddc v)^n$. Consequently,
\[
\int_E(\ddc u)^n
=\int_\Omega\bigl((\ddc f)^n\bigr)_{\pp}
\le\int_\Omega\bigl((\ddc g)^n\bigr)_{\pp}
\le\int_E(\ddc v)^n.
\]
Inner regularity extends this inequality to every pluripolar Borel set and proves (c).

\begin{lemma}\label{lem:classes}
Let $\Omega$ be a bounded hyperconvex domain. If $f\in\mathcal F(\Omega)$ and $f\le g\le0$ with $g\in\PSH(\Omega)$, then $g\in\mathcal F(\Omega)$ and
\[
\int_\Omega(\ddc g)^n\le\int_\Omega(\ddc f)^n.
\]
If $u\in\mathcal E$, $g\in\PSH^-$ and $u\le g$, then $g\in\mathcal E$. The class $\Eloc$ is stable under addition of pluriharmonic functions, and local Cegrell products are unchanged by such additions.
\end{lemma}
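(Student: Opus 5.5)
The plan has three parts, one for each assertion of Lemma~\ref{lem:classes}, and each reduces to Cegrell's definitions in \cite{Cegrell04}.

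\emph{The $\mathcal F$ part.} By definition of $\mathcal F(\Omega)$ there are $f_k\in\mathcal E_0(\Omega)$ with $f_k\downarrow f$ and $\sup_k\int_\Omega(\ddc f_k)^n<\infty$. Put $g_k=\max(f_k,g)$. Each $g_k$ is bounded, nonpositive and psh. Since $f_k\le g_k\le 0$ and $f_k$ has zero boundary values, $g_k$ also tends to zero at $\partial\Omega$. Moreover $g_k\downarrow\max(f,g)=g$.
\begin{enumerate}[label=(\roman*),leftmargin=*]
\item The bounded comparison $\int_\Omega(\ddc g_k)^n\le\int_\Omega(\ddc f_k)^n$ for $\mathcal E_0$ functions with $f_k\le g_k$ is standard \cite[Lemma 4.1]{Cegrell04}, obtained from the Bedford--Taylor comparison principle or integration by parts. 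In particular $g_k\in\mathcal E_0(\Omega)$ with uniformly bounded masses.
\item Hence $g\in\mathcal F(\Omega)$.
\item The mass inequality for $f,g$ follows by weak convergence: $(\ddc g_k)^n\to(\ddc g)^n$ and $(\ddc f_k)^n\to(\ddc f)^n$.
\item For total masses in $\mathcal F$, I would use that $\int(\ddc f)^n=\lim_k\int(\ddc f_k)^n$ (Cegrell's theorem for $\mathcal F$), together with lower semicontinuity of total mass under weak convergence on the $g$ side.
\end{enumerate}

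\emph{The $\mathcal E$ part.} Membership in $\mathcal E$ is local: for each $D\cc\Omega$ there is $u_D\in\mathcal F(\Omega)$ with $u_D=u$ on $D$. I would set $g_D=\max(u_D,g)$. Then $u_D\le g_D\le0$, so $g_D\in\mathcal F$ by the first part. On $D$ we have $g_D=\max(u,g)=g$ since $u\le g$. Hence $g\in\mathcal E(\Omega)$. This uses the equivalent characterization of $\mathcal E$ through local $\mathcal F$-representatives, which is the remark after Definition 4.6 in \cite{Cegrell04}.

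\emph{The pluriharmonic part.} Let $u\in\Eloc$ and let $h$ be pluriharmonic. Near a point, choose a small ball $B$ on which $h$ is bounded, say $|h|\le M$. By the previous step, $u-C\in\mathcal E(B')$ on a smaller hyperconvex neighborhood. Then:
\begin{itemize}
\item From $u+h-C-M\le u-C+0$ we see that $u+h$ lies below an $\mathcal E$-function up to constants, which is the wrong direction.
\item So I would instead use that $u-C\in\mathcal E(B')$ implies $u-C-2M\in\mathcal E(B'')$ on a relatively compact neighborhood $B''\cc B'$. This holds because $\mathcal E$ is local and constants are absorbed after shrinking: $\max(u_D-2M,\ldots)$ arguments, or simply the fact that $\mathcal E(B'')$ contains restrictions of $\mathcal E(B')$-functions minus constants, which are bounded below near $\partial B''$ by locally bounded psh comparisons.
\item Then $u-C-2M\le u+h-C-M\le0$ on $B''$ after adjusting constants. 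The $\mathcal E$ part gives $u+h-C-M\in\mathcal E(B'')$.
\end{itemize}

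\emph{Invariance of products.} Adding pluriharmonic $h_1,\dots,h_n$ does not change the Cegrell products. The approach is to show that $\ddc(u_l+h_l)=\ddc u_l$ as currents. The products are defined via decreasing approximations $u_l^k\in\mathcal E_0$. Then $u_l^k+h_l$ are bounded psh, and Bedford--Taylor products of bounded functions depend only on $\ddc$ of each factor locally. Monotone continuity then passes this identity to the limit.

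\emph{Main obstacle.} I expect the main obstacle to be the bookkeeping with constants and shrinking neighborhoods in the pluriharmonic step. The convergence of total masses in the $\mathcal F$ part is a second point to check.
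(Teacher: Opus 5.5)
Your proposal is correct and follows the paper's proof essentially step for step. The $\mathcal F$ part uses the truncations $g_k=\max(f_k,g)$ with the bounded comparison principle; your lower semicontinuity of total mass on the $g$ side, in place of the paper's application of Cegrell's Proposition 5.1 with weight $-1$ to both sequences, is an inessential variation. The $\mathcal E$ part uses local $\mathcal F$-representatives, and the pluriharmonic part uses shifted bounded approximants. There, the shifted approximants lie in $\mathcal E$ rather than $\mathcal E_0$, so the ``monotone continuity'' you invoke must be decreasing convergence for sequences in $\mathcal E$, which is what the paper uses.

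The one step to tighten is the constant shift you flagged. No shrinking from $B'$ to $B''$ is needed: $\mathcal E(B')$ is a convex cone containing the negative constants, so $u-C-2M\in\mathcal E(B')$ directly. This is exactly the paper's justification.
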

\begin{proof}
Choose $f_k\in\mathcal E_0(\Omega)$ decreasing to $f$ with uniformly bounded masses. The functions $g_k=\max(f_k,g)$ are bounded, have zero boundary values, and have masses at most those of $f_k$ by the bounded comparison principle. Thus $g_k\in\mathcal E_0(\Omega)$, $g_k\downarrow g$, and $g\in\mathcal F(\Omega)$. Proposition 5.1 of \cite{Cegrell04}, with weight $-1$, applied to both sequences yields the displayed mass inequality.

If $u\in\mathcal E(\Omega)$ and $u\le g\le0$, choose, for each $D\cc \Omega$, a function $f\in\mathcal F(\Omega)$ equal to $u$ on a neighborhood of $\overline D$. The function $\max(f,g)$ belongs to $\mathcal F(\Omega)$ by the first assertion and equals $g$ on $D$. This proves the assertion for $\mathcal E$.

For invariance, let $u\in\Eloc$ and let $h$ be pluriharmonic. On a smaller coordinate ball, $h$ is bounded and $u+h\ge u-C$ for some constant $C$. After normalization by constants, the preceding assertion shows that $u+h\in\Eloc$. Here subtraction of a positive constant preserves the local class, since constants are locally bounded psh functions and $\mathcal E$ is a convex cone. Choose bounded psh $u_k\downarrow u$ on the ball. Then $u_k+h\downarrow u+h$, and both sequences are locally bounded. Their Bedford-Taylor currents agree at every level because $\ddc h=0$. Decreasing mixed convergence in $\mathcal E$ gives the equality of their limiting products. For several factors, apply the same argument simultaneously to each factor. Fixed constants can be subtracted on a smaller ball to make every approximant nonpositive. This also proves that the local products agree on overlaps and define global currents.
\end{proof}

\begin{lemma}\label{lem:weighted}
Let $D\cc \Omega$ be open, with $\Omega$ a bounded hyperconvex coordinate domain. For $1\le j\le n$, let $u_k^j\in\PSH^-(\Omega)$ decrease to $u^j\in\mathcal E(\Omega)$. If $h$ is bounded psh on $\Omega$, then
\[
h\,\ddc u_k^1\wedge\cdots\wedge\ddc u_k^n
\longrightarrow h\,\ddc u^1\wedge\cdots\wedge\ddc u^n
\]
weakly on $D$, tested against compactly supported continuous functions.
\end{lemma}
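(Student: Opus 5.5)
The plan is to reduce the statement to Cegrell's convergence theorem with a bounded psh weight, \cite[Lemma 3.2]{Cegrell08}. That result gives convergence of $h\,\ddc u_k^1\wedge\cdots\wedge\ddc u_k^n$ when the $u_k^j$ lie in $\mathcal E(\Omega)$ and decrease to their limits in $\mathcal E$. The only issues are that our $u_k^j$ are merely in $\PSH^-(\Omega)$ and that only convergence on $D$ is claimed.

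\textbf{Step 1: membership in $\mathcal E$.} Since $u_k^j\ge u^j$ and $u_k^j\le0$, Lemma~\ref{lem:classes} gives $u_k^j\in\mathcal E(\Omega)$. So all products are defined as Cegrell products.

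\textbf{Step 2: reduction to $\mathcal F$.} Fix an open $D'$ with $D\cc D'\cc\Omega$. By the second remark after Definition 4.6 in \cite{Cegrell04}, choose $f^j\in\mathcal F(\Omega)$ with $f^j=u^j$ on a neighborhood of $\overline{D'}$. Put $w_k^j=\max(u_k^j,f^j)$. By Lemma~\ref{lem:classes}, $w_k^j\in\mathcal F(\Omega)$. These functions decrease to $\max(u^j,f^j)$, which lies in $\mathcal F(\Omega)$ and equals $u^j$ near $\overline{D'}$. On that neighborhood, $w_k^j=u_k^j$ because $u_k^j\ge u^j=f^j$ there.

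\textbf{Step 3: locality.} Cegrell products are local on open sets (see (a) and (d)). Hence on $D'$ the weighted products built from $w_k^j$ coincide with those built from $u_k^j$, and likewise for the limits. It therefore suffices to prove the convergence for the $\mathcal F$-sequences $w_k^j$.

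\textbf{Step 4: convergence for $\mathcal F$-sequences.} Here \cite[Lemma 3.2]{Cegrell08} applies directly: the sequences lie in $\mathcal F\subset\mathcal E$, lie above their limits, and $h$ is bounded psh. If one prefers a self-contained route, there are two ingredients.
\begin{enumerate}[label=(\alph*),leftmargin=*]
\item Since $h$ is bounded, we may write $h=h_1-C$ with $h_1\in\PSH^-$ bounded. So it suffices to treat $h\le0$, because the unweighted convergence follows from \cite[Theorem 4.2]{Cegrell04} (take $h\equiv-1$).
\item For $\chi\in C_c(D)$, write $\chi=\chi^+-\chi^-$ and approximate each part from above and below by continuous functions. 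Cegrell's \cite[Proposition 5.1]{Cegrell04} shows that $\int(-h)\,\ddc w_k^1\wedge\cdots$ converges to the limit integral for the global masses. Upper semicontinuity of $h$, combined with weak convergence of the measures, gives $\limsup\int\chi(-h)\,d\mu_k\le\int\chi(-h)\,d\mu$ for $\chi\ge0$. The matching lower bound comes from the lower semicontinuity of $-h$ together with convergence of the total weighted masses.
\end{enumerate}

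The main obstacle is this last point: the weight $h$ is only upper semicontinuous, so weak convergence alone does not suffice. I would rely on the total-mass convergence from \cite[Proposition 5.1]{Cegrell04} to upgrade the one-sided inequality to equality, and otherwise simply quote \cite[Lemma 3.2]{Cegrell08}.
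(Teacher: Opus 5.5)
Your proposal is correct and follows the paper's proof: Lemma~\ref{lem:classes} puts the $u_k^j$ in $\mathcal E(\Omega)$, and \cite[Lemma 3.2]{Cegrell08} gives the case $h\le0$. A constant shift together with the unweighted case then handles a general bounded psh $h$, and your reduction to $\mathcal F$ in Steps 2--3 is harmless but unnecessary, since that lemma already applies in $\mathcal E$. In your optional self-contained route the semicontinuity directions are swapped: lower semicontinuity of $-h$ gives $\liminf_k\int\chi(-h)\,d\mu_k\ge\int\chi(-h)\,d\mu$ directly, and it is the matching $\limsup$ bound that needs convergence of the total weighted masses (apply the $\liminf$ bound to $(1-\chi)(-h)$ for $0\le\chi\le1$ and subtract).
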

\begin{proof}
Lemma~\ref{lem:classes} gives $u_k^j\in\mathcal E(\Omega)$ for every $j,k$. The sequences decrease to nontrivial psh functions, so they converge in $L^1_{\mathrm{loc}}$ and hence weakly. For $h\le0$, the asserted convergence is \cite[Lemma 3.2]{Cegrell08}: its hypotheses are precisely $u_k^j,u^j\in\mathcal E(\Omega)$, $u_k^j\ge u^j$, weak convergence of each sequence, and bounded negative psh $h$. For a general bounded psh $h$, choose a constant $C>\sup_\Omega h$ and apply that lemma with weights $h-C$ and $-1$. Linearity gives the assertion for $h$. Testing against functions compactly supported in $D$ proves the stated local form.
\end{proof}

Finitely many smaller hyperconvex coordinate neighborhoods and a smooth partition of unity give this convergence near any fixed compact subset of $X$. On each such neighborhood a single constant makes all terms of a decreasing sequence nonpositive, since its first term is locally bounded above. Applying the lemma with the partition functions as continuous test functions gives convergence with a smooth cutoff and a bounded psh weight.

\begin{lemma}\label{lem:max}
Let $u,v\in\Eloc$ and $g=\max(u,v)$. Then
\[
\bigl((\ddc g)^n\bigr)_{\np}\ge
\ind_{\{u\ge v\}}\bigl((\ddc u)^n\bigr)_{\np}
+\ind_{\{u<v\}}\bigl((\ddc v)^n\bigr)_{\np}.
\]
If $u\preceq_{\Cap}v$ and $h\in\Eloc$ satisfies $u\le h\le g$, then the polar parts of $(\ddc u)^n$ and $(\ddc h)^n$ coincide.
\end{lemma}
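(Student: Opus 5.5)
Both assertions are local: the measures involved are defined through local $\mathcal F$-representatives and agree on overlaps. So we work on a bounded hyperconvex coordinate domain $\Omega$ and, after subtracting constants, assume $u,v\in\mathcal E(\Omega)$. Fix $D\cc\Omega$ and replace $u,v$ by $\mathcal F(\Omega)$-functions that coincide with them near $\overline D$, as in the proof of Lemma~\ref{lem:classes}. This does not change the restrictions to $D$ of the measures in question, by locality of Cegrell products on open sets. The second assertion also needs hypothesis \eqref{eq:cap}, and here we simply reuse the localization argument given above for (c).

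\emph{First inequality.} The non-polar parts are carried by $\{u>-\infty\}$ and $\{v>-\infty\}$. Set $u_k=\max(u,-k)$ and $v_k=\max(v,-k)$, both bounded, and $g_k=\max(u_k,v_k)=\max(g,-k)$. For bounded psh functions, plurifine locality (\cite[Theorem 4.1]{NP09} or the Bedford--Taylor maximum inequality) gives
\[
(\ddc g_k)^n\ge\ind_{\{u_k\ge v_k\}}(\ddc u_k)^n+\ind_{\{u_k<v_k\}}(\ddc v_k)^n .
\]
Restrict this to the plurifine open set $O_k=\{u>-k\}\cap\{v>-k\}$. On $O_k$ we have $u_k=u$, $v_k=v$ and $g_k=g$, and by plurifine locality in Cegrell classes the Monge--Amp\`ere measures there equal those of $u$, $v$, $g$. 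We obtain
\[
\ind_{O_k}(\ddc g)^n\ge\ind_{O_k\cap\{u\ge v\}}(\ddc u)^n+\ind_{O_k\cap\{u<v\}}(\ddc v)^n .
\]
Letting $k\to\infty$, the sets $O_k$ increase to $\{u>-\infty\}\cap\{v>-\infty\}$. The right-hand side misses parts of the non-polar measures only on $\{u\ge v,\ v=-\infty\}\subset\{u=-\infty\}$, where $((\ddc u)^n)_\np$ vanishes, and on $\{u<v,\ u=-\infty\}$, which is pluripolar and so carries no $((\ddc v)^n)_\np$ mass. The left-hand side is at most $((\ddc g)^n)_\np$, which proves the inequality.

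\emph{Second assertion.} Since $u\le h$, part (b) gives $((\ddc h)^n)_\pp\le((\ddc u)^n)_\pp$. For the reverse inequality, note that $h\le g=\max(u,v)$ implies $\{u<h-t\}\subset\{u<v-t\}$. Hence $u\preceq_{\Cap}h$ follows from $u\preceq_{\Cap}v$, and (c) applied to the pair $(u,h)$ gives $((\ddc u)^n)_\pp\le((\ddc h)^n)_\pp$. Here $h\le g\le0$ after normalization, so $h\in\mathcal E$ by Lemma~\ref{lem:classes}, and (c) applies.

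\emph{Main obstacle.} The delicate point is the plurifine locality step in the first part. One must know that, for $\mathcal E$-functions, the Cegrell measure restricted to $\{u>-k\}$ agrees with that of $\max(u,-k)$. This is exactly the locality for maxima of \cite[Theorem 4.1]{NP09}, and we invoke it directly rather than reprove it.
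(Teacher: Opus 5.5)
Your proof is correct except for one false inclusion in the limit step. The claim $\{u\ge v,\ v=-\infty\}\subset\{u=-\infty\}$ fails, because $u\ge v$ holds automatically wherever $v=-\infty$. What the $u$-term actually loses is $\{u>-\infty\}\cap\{v=-\infty\}$. This set lies in the pluripolar set $\{v=-\infty\}$, so it carries no mass of $((\ddc u)^n)_{\np}$ — the same reasoning you already use for the $v$-term.

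With this fixed, your first part follows a different route from the paper's. The paper works directly in $\mathcal E$:
\begin{itemize}
\item Locality for maxima gives $\ind_{\{u<v\}}(\ddc g)^n=\ind_{\{u<v\}}(\ddc v)^n$.
\item On $A_0=\{u\ge v\}\cap\{u>-\infty\}$ it uses $g_\epsilon=\max(u+\epsilon,v)$, which equals $u+\epsilon$ on the plurifine open set $\{u+\epsilon>v\}\supset A_0$.
\item The bound $(\ddc g_\epsilon)^n\ge\ind_{A_0}(\ddc u)^n$ survives $\epsilon\downarrow0$ by decreasing convergence, since the minorant is a fixed measure.
\end{itemize}
You instead truncate at $-k$ and use the classical maximum inequality for bounded psh functions (itself proved by the same $\epsilon$-shift in the Bedford--Taylor setting). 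You then return to $u,v,g$ using only locality for maxima with constants, and let $k\to\infty$ by monotone convergence of restrictions of fixed measures, so no weak limit is needed. The cost is that you obtain the inequality only on $\{u>-\infty\}\cap\{v>-\infty\}$ and must discard a pluripolar remainder. The paper's set $A_0$ already contains the points where $v=-\infty<u$, so it avoids that step.

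In the second assertion you use (b) for one inequality and (c) for the other. The paper instead notes that $\{h<u-t\}=\varnothing$, so $h\preceq_{\Cap}u$ holds trivially, and applies (c) in both directions. The two approaches are interchangeable.
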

\begin{proof}
Plurifine locality gives equality with $(\ddc v)^n$ on $\{u<v\}$. For the set $A_0=\{u\ge v\}\cap\{u>-\infty\}$, put $g_\epsilon=\max(u+\epsilon,v)$. Locality gives
\[
(\ddc g_\epsilon)^n\ge\ind_{A_0}(\ddc u)^n.
\]
Test this inequality against nonnegative continuous compactly supported functions and let $\epsilon\downarrow0$. Decreasing convergence yields the same domination for $(\ddc g)^n$. Taking nonpolar parts and restricting the resulting measure inequality to $\{u\ge v\}$ gives the assertion there, since the nonpolar part of $(\ddc u)^n$ gives no mass to $\{u=-\infty\}$. Combining the two disjoint pieces proves the first assertion.

For the second assertion, $\{u<h-t\}\subset\{u<v-t\}$ and $\{h<u-t\}=\varnothing$ for $t>0$. Thus $u\preceq_{\Cap}h$ and $h\preceq_{\Cap}u$. Apply (c) above in both directions.
\end{proof}

\section{Integration by parts and a comparison inequality}

\begin{lemma}\label{lem:mixedpolar}
Let $f_1,\ldots,f_{n-1},u,v\in\Eloc$ and set $T_0=\ddc f_1\wedge\cdots\wedge\ddc f_{n-1}$. The trace measure of $T_0$ vanishes on pluripolar sets. If $u\le v\le u+M$ for some constant $M$, then
\[
(\ddc u\wedge T_0)_{\pp}=(\ddc v\wedge T_0)_{\pp}.
\]
Moreover, the polar part of a mixed product is carried by the intersection of the pole sets of its factors.
\end{lemma}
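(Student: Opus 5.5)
The plan is to work locally on a bounded hyperconvex coordinate domain $\Omega$. After subtracting constants we may take $f_1,\dots,f_{n-1},u,v\in\mathcal E(\Omega)$. By the second remark after Definition 4.6 in \cite{Cegrell04}, on a fixed $D\cc\Omega$ every factor may be replaced by an $\mathcal F(\Omega)$-representative. Doing so does not affect measures on $D$, by locality of Cegrell products. The statement has three parts, and I would prove them in order.

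\emph{Trace of $T_0$.} For $P\subset D$ pluripolar, we need the trace measure $T_0\wedge\beta^{\,}$ with $\beta=\ddc|z|^2$ to give $P$ zero mass. Write $\beta=\ddc\chi$ with $\chi=|z|^2-C\le0$, a bounded psh function on $\Omega$ after shrinking. Then $T_0\wedge\beta=\ddc f_1\wedge\cdots\wedge\ddc f_{n-1}\wedge\ddc\chi$ is a mixed Cegrell product with one bounded factor. By the mixed polar comparison (b) in the form of \cite[Lemma 4.1]{ACCP09}, the polar part of a mixed product is dominated by products whose factors all have poles on the set. Concretely, I would use the standard estimate: the polar part of $\ddc w_1\wedge\cdots\wedge\ddc w_n$ is carried by $\bigcap_l\{w_l=-\infty\}$. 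This is the last assertion of the lemma. Since $\chi$ is bounded, the pole set of $\chi$ is empty, so the polar part vanishes. Thus the last assertion implies the first, and the crux is the carrying statement.

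\emph{Polar part carried by the common pole set.} Fix $l$ and consider $w_l^k=\max(w_l,-k)$. Plurifine locality (d) gives
\[
\ind_{\{w_l>-k\}}\ddc w_1\wedge\cdots\wedge\ddc w_n=\ind_{\{w_l>-k\}}\ddc w_1\wedge\cdots\wedge\ddc w_l^k\wedge\cdots\wedge\ddc w_n .
\]
This is the locality for maxima in Cegrell's classes \cite[Theorem 4.1]{NP09}, applied to the $\mathcal F$-representatives. The right-hand side has a bounded factor, and I must show that such a product puts no mass on pluripolar sets. For this I would use the representation
\[
\ddc w_1\wedge\cdots\wedge\ddc w_n=\tfrac1n\ddc(w_1+\cdots+w_n)\text{-polarization}
\]
only if necessary. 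The cleaner route is to cite \cite[Lemma 4.1 and the following remark]{ACCP09}, which bounds the mixed polar mass on a set $P$ by $\prod_l\bigl(\int_P(\ddc w_l)^n\bigr)^{1/n}$. With a bounded factor, the corresponding product vanishes, by the decomposition $\mu_{\pp}=\ind_{\{w=-\infty\}}\mu$ recalled in the preliminaries. Letting $k\to\infty$ shows that the polar part gives no mass to $\{w_l>-\infty\}$. Intersecting over $l$ gives the carrying statement.

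\emph{Equality of polar parts when $u\le v\le u+M$.} First, (b) in mixed form, i.e.\ \cite[Lemma 4.1]{ACCP09} with $u\le v$, gives $(\ddc v\wedge T_0)_{\pp}\le(\ddc u\wedge T_0)_{\pp}$. For the reverse inequality, note that $u\ge v-M$. Applying the same mixed comparison to the pair $v-M\le u$ gives $(\ddc u\wedge T_0)_{\pp}\le(\ddc(v-M)\wedge T_0)_{\pp}$. The right-hand side equals $(\ddc v\wedge T_0)_{\pp}$ by constant invariance in Lemma~\ref{lem:classes}. These local equalities agree on overlaps, which globalizes the result.

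The main obstacle is the middle step: justifying that a mixed Cegrell product with at least one bounded factor charges no pluripolar set. I would rely on the Hölder-type mixed estimate from \cite{ACCP09}. If that estimate is not available in exactly this form, the fallback is to argue directly. Take $P$ compact and pluripolar, and choose $\psi\in\PSH^-$ with $P\subset\{\psi=-\infty\}$. Then bound $\int_P\ddc w_1^k\wedge\cdots$ by capacity estimates of Bedford--Taylor type, using uniform mass bounds in $\mathcal F$.
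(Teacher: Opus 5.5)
Your proposal is correct and follows essentially the paper's route. The Hölder-type mixed inequality on pluripolar sets from \cite{ACCP09} gives the trace and carrying assertions; this is Lemma 4.4 there, whereas Lemma 4.1 with its remark is the mixed monotonicity you use in the last step. That monotonicity, applied to $u\le v$ and $v-M\le u$, gives the equality of polar parts. Your detour through $\max(w_l,-k)$ and plurifine locality is unnecessary, since applying the inequality directly to a pluripolar set $P\subset\{w_l>-\infty\}$ already gives $\int_P(\ddc w_l)^n=0$.
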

\begin{proof}
Work on a smaller hyperconvex coordinate neighborhood, with all potentials normalized to be nonpositive. For a pluripolar Borel set $E$, the mixed inequality of \cite[Lemma 4.4]{ACCP09} gives
\[
\int_E\ddc g_1\wedge\cdots\wedge\ddc g_n
\le\prod_{l=1}^n\left(\int_E(\ddc g_l)^n\right)^{1/n}.
\]
Taking one factor equal to a smooth strictly psh function proves the assertion about the trace of $T_0$. Taking $E$ in the finite locus of one of the $g_l$ proves the assertion about the support of the polar part. Finally, apply the mixed polar monotonicity in \cite[Lemma 4.1 and the first following remark]{ACCP09} to $u\le v$ and $v\le u+M$, after a common constant shift making these functions nonpositive. Constants do not change the currents. For $n=1$, the empty product $T_0=1$ has its usual meaning.
\end{proof}

\begin{lemma}\label{lem:ibp}
Let $u\le v\le u+M$ be families of local $\mathcal E$-potentials with the same transition functions, equal outside a compact set $C$. Put $h=v-u$ off the common pole set and $h=0$ on that set. Write $T=\ddc u_j$, $S=\ddc v_j$. Let $T_0$ be a closed positive $(n-1,n-1)$-current locally given by a mixed product of local $\mathcal E$-potentials. If $\phi$ is bounded psh on a neighborhood of $C$, then
\begin{equation}
\int h\ddc \phi\wedge T_0
=\int\psi \phi(S-T)\wedge T_0,\label{eq:ibp1}
\end{equation}
where $\psi$ is a smooth compactly supported function in the neighborhood on which $\phi$ is defined and is equal to one near $C$. There is a positive locally finite measure $\nu$, independent of $k$, supported in $C$ and vanishing on pluripolar sets, such that, for every integer $k\ge2$,
\begin{equation}
\int h^k\ddc \phi\wedge T_0
=k\int \phi h^{k-1}(S-T)\wedge T_0
+k(k-1)\int \phi h^{k-2}\nu.\label{eq:ibpq}
\end{equation}
All integrals in these formulas are finite. The difference of the mixed measures in \eqref{eq:ibp1} has no polar part.
\end{lemma}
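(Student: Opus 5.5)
Since $u$ and $v$ agree outside $C$, the function $h$ is a bounded global Borel function with $0\le h\le M$, supported in $C$. Off the common pole set it equals $v_j-u_j$, a difference of local $\mathcal E$-potentials. The plan is to reduce both identities to the bounded Bedford--Taylor case by truncation and then pass to the limit with Lemma~\ref{lem:weighted}.

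\textbf{Truncation.} On each $\Omega_j$ put $u_j^s=\max(u_j,-s+\chi_j)$ and $v_j^s=\max(v_j,-s+\chi_j)$, where $\chi_j$ are local pluriharmonic normalizations compatible with the transitions. Concretely, on a finite cover of a neighbourhood of $C$ I would take $\max(u_j,\,-s+h_{j1}$-type corrections$)$. A simpler route is to truncate $w=v-u$ directly: set $v^s=\max(v,u^s)$ with $u^s=\max(u,-s)$ locally, which are glued since the transitions are shared. I would work with $u^s=\max(u_j,w_j-s)$ for a fixed local bounded family $w_j$ having the same transitions. Such a family exists near the compact $C$ by taking $w_j=u_j$ smoothed. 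If that is awkward, I would instead only truncate near $C$, using $\psi$ as a cutoff, since outside $C$ the integrands vanish.

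Then $u^s\le v^s\le u^s+M$, $u^s\downarrow u$, $v^s\downarrow v$, and $h_s=v^s-u^s\to h$ pointwise off the common pole set.

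\textbf{Bounded case.} Similarly truncate the factors of $T_0$. For bounded potentials, \eqref{eq:ibp1} is Stokes: $h_s=0$ near $\partial(\mathrm{supp}\,\psi)$, so
\[
\int h_s\ddc\phi\wedge T_0^s=\int\psi\phi\,\ddc h_s\wedge T_0^s .
\]
For \eqref{eq:ibpq}, use $\ddc(h_s^k)=k h_s^{k-1}\ddc h_s+k(k-1)h_s^{k-2}dh_s\wedge d^c h_s$, as in \cite[Lemmas 3.2--3.3]{NP09}. The second term introduces $\nu_s=\psi\,dh_s\wedge d^ch_s\wedge T_0^s$. I would define it polarly as
\[
\nu_s=\tfrac12\bigl[\ddc (h_s^2)-2h_s\ddc h_s\bigr]\wedge T_0^s,
\]
which expands as a combination of products of the form $\ddc(\text{bounded psh})\wedge T_0^s$ weighted by bounded quasi-psh functions. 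Note that $h_s^2$ is a difference of psh-type expressions: $(v^s-u^s)^2$ with $v^s\le u^s+M$. To keep everything within psh terms I would write $h_s^2$ via $\max$-constructions, or use $dh\wedge d^ch=\tfrac12\ddc(h^2)-h\ddc h$ formally.

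\textbf{Passing to the limit.} The right-hand sides involve $\phi h_s^{m}\,\ddc u^s\wedge T_0^s$. Here $h_s=v^s-u^s$ is not psh, so Lemma~\ref{lem:weighted} does not apply directly. I would use the bounded psh weights $\phi$ together with Xing/Cegrell-type convergence for $h_s$-weighted measures, namely quasi-continuity plus uniform capacity control, exploiting $0\le h_s\le M$.

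This step is the main obstacle: convergence of $\int\phi h_s^{k-1}\,\ddc u^s\wedge T_0^s$. I would try monotonicity first. After the replacement $u^s=\max(u,v-s)$ one gets $h_s=\min(h,s)$-like behaviour, which is increasing in $s$. Combining this with lower semicontinuity and the bounded-weight convergence of Lemma~\ref{lem:weighted} would give both inequalities, hence equality. Finiteness then follows from $|h|\le M$ and the boundedness of $\phi$.

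\textbf{Polar statements and $\nu$.} Lemma~\ref{lem:mixedpolar} gives $(\ddc u\wedge T_0)_{\pp}=(\ddc v\wedge T_0)_{\pp}$, so $(S-T)\wedge T_0$ has no polar part. This also makes the conventions on the pole set harmless. The measure $\nu$ is defined as the limit of $\nu_s$ after restricting to the finite locus. It is then independent of $k$, positive, supported in $C$, and nonpolar, since $\nu_s\le C'\,\ddc(u^s+v^s)\wedge T_0^s$ on the finite locus.
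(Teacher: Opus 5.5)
Your proposal has genuine gaps exactly where the lemma is nontrivial.

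\textbf{Truncation and gluing.} The truncations you propose do not give global objects.
\begin{enumerate}[label=(\alph*),leftmargin=*]
\item $\max(u_j,-s)$ does not obey $\max(u_j,-s)=\max(u_l,-s)+h_{jl}$.
\item A pluriharmonic family $\chi_j$ with $\chi_j-\chi_l=h_{jl}$ exists only if $u_j-\chi_j$ glues to a global psh potential of $T$. That is precisely what is not assumed (Example~\ref{ex:blowup}).
\item A smooth family with these transitions exists via a partition of unity, but it is not psh in general, so $\max(u_j,w_j-s)$ is not psh. Chartwise smoothing of $u_j$ also destroys the transitions.
\item Restricting to a neighbourhood of $C$ does not help, since $C$ need not lie in one chart.
\end{enumerate}
So your bounded ``Stokes'' identity for $h_s$ has no global meaning.

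\textbf{The limit passage.} This is the step you flag yourself, and it is not carried out. The weight $h_s^{k-1}$ is neither psh nor continuous. Hence Lemma~\ref{lem:weighted} and weak convergence of $\ddc u^s\wedge T_0^s$ give nothing for $\int\phi h_s^{k-1}\ddc u^s\wedge T_0^s$. The device $u^s=\max(u,v-s)$ is vacuous here: $v\le u+M$ forces $\max(u,v-s)=u$ for $s\ge M$. The difficulty is the unboundedness of $u,v$, not of $h$.

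The missing idea is plurifine locality together with a cutoff that vanishes where truncation is active, used chart by chart. Set $u_l=\max(u,-l)$, $v_l=\max(v,-l)$, $h_l=v_l-u_l$ and $\psi_l=1+\max(u+v,-l)/l$. Then $\{\psi_l>0\}\subset\{u>-l,\ v>-l\}$, where $h_l=h$ and the truncated mixed products equal the original ones. Therefore
\[
\psi_lh_l^{k-1}(\ddc v_l-\ddc u_l)\wedge T_0=\psi_lh^{k-1}(S-T)\wedge T_0
\]
exactly. The terms with $\ddc\psi_l$ are $O(1/l)$ by uniform mass bounds. The terms with $d\psi_l$ vanish in the limit by Cauchy--Schwarz, the uniform energy bound for $h_l$, and $\int f\,d\psi_l\wedge d^c\psi_l\wedge T_0\to0$.

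This yields the local current identities $\ddc(hT_0)=(S-T)\wedge T_0$ and $\ddc(h^kT_0)=kh^{k-1}(S-T)\wedge T_0+k(k-1)h^{k-2}\nu$. These involve only the global function $h$. The global formulas then follow by summing a local Green formula with the bounded psh weight $\phi$ over partition functions adding up to $\psi$. The derivative terms drop out because $d\psi=0$ near $C$.

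\textbf{The measure $\nu$.} Your treatment is incomplete, and the nonpolarity argument is wrong. You do not show that $\lim\nu_s$ exists. The paper obtains $\nu$ as the increasing limit of $\psi_l\,dh_l\wedge d^ch_l\wedge T_0$, which is monotone by plurifine locality of energy measures and locally finite by the energy bound. It then identifies $\nu$ with $\tfrac12\ddc(h^2T_0)-h(S-T)\wedge T_0$, which gives chart independence and support in $C$.

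Your domination $\nu_s\le C'\ddc(u^s+v^s)\wedge T_0^s$ on the finite locus fails. Take $n=1$, $T_0=1$, and on the unit disc $u=\max(\log|z|,-1)$, $v=\max(\log|z|,-\tfrac12)$. Then $u\le v\le u+\tfrac12$, with equality off $\{|z|\le e^{-1/2}\}$. Both functions are harmonic on $\{e^{-1}<|z|<e^{-1/2}\}$, so $\ddc(u+v)=0$ there. Yet $h=-\tfrac12-\log|z|$ has nonvanishing gradient there, so $dh\wedge d^ch\neq0$.

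Nonpolarity should instead come from three ingredients, used before passing to the monotone limit:
\begin{enumerate}[label=(\alph*),leftmargin=*]
\item the identity $df\wedge d^cf\wedge T_0=\tfrac12\ddc(f^2)\wedge T_0-f\,\ddc f\wedge T_0$ for bounded psh $f\ge0$;
\item Lemma~\ref{lem:mixedpolar};
\item polarization of the energy measures $\mu_{T_0}(f)=df\wedge d^cf\wedge T_0$, namely $\mu_{T_0}(f-g)=2\mu_{T_0}(f)+2\mu_{T_0}(g)-\mu_{T_0}(f+g)$.
\end{enumerate}

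Your use of Lemma~\ref{lem:mixedpolar} to show that $(S-T)\wedge T_0$ has no polar part is correct and matches the paper.
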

\begin{proof}
We first recall the formulas for bounded psh functions used below. On a coordinate neighborhood, for bounded psh functions $f,g$ and $h=f-g$, Bedford--Taylor calculus relative to the closed positive current $T_0$ defines the positive locally finite energy measure
\[
\mu_{T_0}(h)=dh\wedge d^ch\wedge T_0
=\frac12\ddc(h^2T_0)-h(\ddc f-\ddc g)\wedge T_0.
\]
For mixed products of bounded psh functions, the product rule, integration by parts and Cauchy--Schwarz inequality follow from \cite[Theorem 2.7, Corollaries 2.8--2.9 and Proposition 3.3]{BT82}. For $T_0$ with local $\mathcal E$-potentials, approximate its factors by decreasing sequences of bounded psh functions. Lemma~\ref{lem:weighted} gives convergence of the terms in the displayed identity and of their integrals with bounded psh weights. Polynomial weights in bounded psh functions are finite linear combinations of bounded psh functions, after addition of constants and polarization. Thus the same formulas and inequalities hold for $T_0$. Their plurifine locality follows by polarization from the locality of bounded Bedford--Taylor products relative to $T_0$, as established in \cite[Theorem 2.9]{Vu21} on an arbitrary coordinate domain. The truncation argument below also uses the energy estimates in \cite[Section 2]{Vu22}. We prove the required integration-by-parts formula on $X$ using local cutoffs.

In the present setting these energy measures also vanish on pluripolar sets. Indeed, after adding a constant to a bounded psh function $f$, we may suppose that $f\ge0$. Then $f^2$ is bounded psh and
\[
\mu_{T_0}(f)=\frac12\ddc(f^2)\wedge T_0-f\,\ddc f\wedge T_0.
\]
Both terms on the right vanish on pluripolar sets by Lemma~\ref{lem:mixedpolar}. The identity $\mu_{T_0}(f-g)=2\mu_{T_0}(f)+2\mu_{T_0}(g)-\mu_{T_0}(f+g)$ gives the same conclusion for bounded differences. These energy measures are unchanged by addition of constants.

We now give the local estimates that justify integration by parts. Fix a coordinate neighborhood and normalize the representatives to $u,v\le0$. Set
\[
u_l=\max(u,-l),\quad v_l=\max(v,-l),\quad
h_l=v_l-u_l,\quad g_l=\max(u+v,-l),\quad
\psi_l=1+\frac{g_l}{l}.
\]
Then $0\le h_l\le M$, $0\le\psi_l\le1$, and
\begin{equation}
\{\psi_l>0\}\subset\{u>-l,\ v>-l\}.\label{eq:cutofflocus}
\end{equation}
On every relatively compact subdomain, the masses of
\begin{equation}
\ddc u_l\wedge T_0,\qquad \ddc v_l\wedge T_0,\qquad
\ddc g_l\wedge T_0\label{eq:localmass}
\end{equation}
are bounded uniformly in $l$. Indeed, $u_l\downarrow u$, $v_l\downarrow v$, and $g_l\downarrow u+v$ in $\Eloc$; apply decreasing mixed convergence with the factors of $T_0$ fixed.

Let $f\ge0$ be a smooth compactly supported cutoff. Bounded Bedford--Taylor calculus gives
\[
\int f\,dh_l\wedge d^ch_l\wedge T_0
=\frac12\int h_l^2\ddc f\wedge T_0
-\int fh_l(\ddc v_l-\ddc u_l)\wedge T_0.
\]
Consequently,
\begin{equation}
\sup_l\int f\,dh_l\wedge d^ch_l\wedge T_0<\infty.\label{eq:gapenergy}
\end{equation}
The same identity applied to $\psi_l-1$ yields
\begin{align}
\int f\,d\psi_l\wedge d^c\psi_l\wedge T_0
&=\frac12\int(\psi_l-1)^2\ddc f\wedge T_0
-\frac1l\int f(\psi_l-1)\ddc g_l\wedge T_0
\longrightarrow0.\label{eq:cutoffenergy}
\end{align}
For the first term, use dominated convergence and Lemma~\ref{lem:mixedpolar}, since $\psi_l\uparrow1$ off $\{u+v=-\infty\}$. The second term is $O(l^{-1})$ by \eqref{eq:localmass}. Also $\ddc\psi_l\wedge T_0$ has mass $O(l^{-1})$ on compact subsets.

Write $\mu=(S-T)\wedge T_0$. By Lemma~\ref{lem:mixedpolar}, its polar part vanishes. On the set in \eqref{eq:cutofflocus}, plurifine locality identifies the truncated mixed products with the original ones, and $h_l=h$. Since $\psi_l=0$ on the complement, for every integer $k\ge1$ we obtain
\begin{equation}\begin{aligned}
\psi_l h_l^{k-1}(\ddc v_l-\ddc u_l)\wedge T_0
&=\psi_l h^{k-1}\mu\\
&\longrightarrow h^{k-1}\mu.
\end{aligned}\label{eq:cutoffmeasure}\end{equation}
The convergence is in total variation on compact subsets: the bounded factors converge $|\mu|$-almost everywhere, so dominated convergence applies to the fixed locally finite measure $|\mu|$. The common pole set is $|\mu|$-null; zeroth powers are interpreted as one.

The energy measures agree on the sets where the truncated potentials agree. If $j\ge l$, then $h_j=h_l$ on $\{\psi_l>0\}$, and bounded plurifine locality gives equality of their energy measures there. Since $\psi_j\ge\psi_l$ and the energy measures are positive, the measures
\[
\psi_l\,dh_l\wedge d^ch_l\wedge T_0
\]
increase to a positive measure $\nu$. It is locally finite by \eqref{eq:gapenergy}. Each term vanishes on pluripolar sets by the energy calculation at the beginning of the proof; hence so does the limit. Moreover, on $\{\psi_l>0\}$ we have $h_l=h$, so the energy terms multiplied by $h_l^{k-2}$ converge in total variation on compact subsets to $h^{k-2}\nu$ for every $k\ge2$.

Expand $\ddc(\psi_l h_l^k T_0)$. The terms containing $\ddc\psi_l$ tend to zero by \eqref{eq:localmass}. Those containing $d\psi_l$ tend to zero by Cauchy--Schwarz, \eqref{eq:gapenergy}, and \eqref{eq:cutoffenergy}. Using \eqref{eq:cutoffmeasure} for the remaining terms gives, as currents,
\begin{align}
\ddc(h T_0)&=\mu,\label{eq:localchain1}\\
\ddc(h^k T_0)&=k h^{k-1}\mu
+k(k-1)h^{k-2}\nu,\qquad k\ge2.\label{eq:localchainq}
\end{align}
The left sides converge distributionally because $\psi_l h_l^k\to h^k$ in $L^1_{\mathrm{loc}}$ for the trace measure of $T_0$. The case $k=2$ identifies
\[
\nu=\tfrac12\ddc(h^2T_0)-h \mu.
\]
Thus the local definitions of $\nu$ are independent of the representatives and determine a global measure. It is zero outside $C$, since the local differences are zero there.

We next prove the integration-by-parts formula with a bounded psh weight and verify its independence of the coordinate charts. Fix $k\ge1$ and set $f_l=\psi_l h_l^k$. For a real smooth function $g$ compactly supported in the chart, the bounded Bedford--Taylor integration-by-parts formula gives
\begin{align}
\int g f_l\,\ddc\phi\wedge T_0
={}&\int g\phi\,\ddc f_l\wedge T_0
+\int \phi f_l\,\ddc g\wedge T_0\notag\\
&+\int\phi\bigl(dg\wedge d^cf_l+df_l\wedge d^cg\bigr)\wedge T_0.
\label{eq:localgreen}
\end{align}
Here $f_l$ is a polynomial in bounded psh functions. Thus the formula follows by multilinearity from bounded Bedford--Taylor calculus relative to the closed positive current $T_0$. In particular, it applies to bounded psh $\phi$ and does not require differentiating this weight.

The measure $\ddc\phi\wedge T_0$ vanishes on pluripolar sets by Lemma~\ref{lem:mixedpolar}. Since $f_l\to h^k$ off the poles and $|f_l|\le M^k$, dominated convergence applies to the left side of \eqref{eq:localgreen} and to the term with $\ddc g$. Expanding $\ddc f_l\wedge T_0$, the terms with $\ddc\psi_l$ tend to zero in total variation on compact sets by \eqref{eq:localmass}. The cross terms with $d\psi_l$ tend to zero in total variation by Cauchy--Schwarz, \eqref{eq:gapenergy}, and \eqref{eq:cutoffenergy}. The remaining terms converge in total variation by \eqref{eq:cutoffmeasure} and by the increasing construction of $\nu$. Hence they may also be multiplied by the bounded Borel function $g\phi$.

We give the corresponding estimate for the first-order terms. Choose a compact set $K$ containing the support of $g$ in the chart and put
\[
E_j=K\cap\{u+v\le-j\}.
\]
For $l\ge j$, we have $h_l=h_j$ on $K\setminus E_j$. Plurifine locality of the energy measures, applied to the bounded difference $h_l-h_j$, gives
\[
\ind_{K\setminus E_j}d(h_l-h_j)\wedge d^c(h_l-h_j)\wedge T_0=0.
\]
By Cauchy--Schwarz, $dh_l\wedge d^cg\wedge T_0$ and $dg\wedge d^ch_l\wedge T_0$ therefore agree on $K\setminus E_j$ with the corresponding measures having index $j$. This is the local energy argument used in \cite[Section 2]{Vu22}. On $E_j$, Cauchy--Schwarz gives
\begin{align}
&\left\|\ind_{E_j}\psi_l h_l^{k-1}
 dh_l\wedge d^cg\wedge T_0\right\|\notag\\
&\qquad\le M^{k-1}
\left(\int_K dh_l\wedge d^ch_l\wedge T_0\right)^{1/2}
\left(\int_{E_j}dg\wedge d^cg\wedge T_0\right)^{1/2}
\longrightarrow0\quad(j\to\infty),\label{eq:firstordertail}
\end{align}
uniformly in $l$. The first factor is bounded by \eqref{eq:gapenergy}, using a cutoff equal to one on $K$. The second tends to zero because $E_j$ decreases to a pluripolar set and the trace measure of $T_0$ has no mass there. The same estimate applies with $dg\wedge d^ch_l$. On $K\setminus E_j$, we have $0\le1-\psi_l\le j/l$ for $l\ge j$, so $\psi_l\to1$ uniformly there, while $h_l=h_j$ and the first-order measures just considered are independent of $l$. Finally, the terms $h_l^k d\psi_l\wedge T_0$ paired with $dg$ tend to zero in total variation by \eqref{eq:cutoffenergy}. These observations prove total variation convergence of both first-order measures in \eqref{eq:localgreen}, also after multiplication by $\phi$.

The resulting first-order currents are $d(h^kT_0)$ and $d^c(h^kT_0)$: this follows by distributional convergence from $f_lT_0\to h^kT_0$. They are therefore independent of the representatives and agree on overlaps. We have proved the local identity
\begin{align}
\int g h^k\,\ddc\phi\wedge T_0
={}&\int g\phi\,\ddc(h^kT_0)
+\int\phi h^k\,\ddc g\wedge T_0\notag\\
&+\int\phi\bigl(dg\wedge d^c(h^kT_0)
+d(h^kT_0)\wedge d^cg\bigr).
\label{eq:greenlimit}
\end{align}
The measures and currents involving $h$ are supported in $C$. Choose smooth partition functions $g_1,\ldots,g_N$ supported in the charts such that $\sum_{j=1}^{N}g_j=\psi$, where $\psi$ is a smooth compactly supported function equal to one near $C$. Sum \eqref{eq:greenlimit} over these charts. The terms with derivatives of $g_j$ become the corresponding terms with derivatives of $\psi$, and vanish because $d\psi=\ddc\psi=0$ near $C$. We obtain
\[
\int h^k\,\ddc\phi\wedge T_0
=\int\psi\phi\,\ddc(h^kT_0).
\]
For $k=1$, use \eqref{eq:localchain1}; for $k\ge2$, use \eqref{eq:localchainq}. This proves \eqref{eq:ibp1}--\eqref{eq:ibpq}. Compact support and the preceding local bounds prove finiteness of all the integrals.
\end{proof}

\begin{proposition}\label{prop:compact}
Let $X$ be a complex manifold. Let $u\le v$ be families of local $\mathcal E$-potentials with the same transition functions, equal outside $C\cc X$. Let $\phi_1,\ldots,\phi_n$ be psh on a neighborhood $\Omega$ of $C$, with $-1\le \phi_l\le0$. If $\psi\in C_c^\infty(\Omega)$ is one near $C$, then
\begin{equation}
\frac1{n!}\int_\Omega(v-u)^n\ddc \phi_1\wedge\cdots\wedge\ddc \phi_n
\le\int_\Omega\psi(-\phi_1)(T^n-S^n),\label{eq:compact}
\end{equation}
where $T=\ddc u_j$, $S=\ddc v_j$. The signed integral is finite and independent of $\psi$.
\end{proposition}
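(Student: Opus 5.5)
The plan is to reduce to the case of a bounded gap, to which Lemma~\ref{lem:ibp} applies, to prove \eqref{eq:compact} there by an induction in the style of Xing and of Nguyen--Pham \cite{NP09}, and then to remove the truncation.

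\emph{Reduction to a bounded gap.} For $M>0$ put $u^M_j=\max(u_j,v_j-M)$. Since $u_j\le v_j$, we have $u_j\le u^M_j\le v_j\le u^M_j+M$. The family $u^M$ has the same transition functions, because $u_j$ and $v_j-M$ transform by the same $h_{jl}$, and it coincides with $v$ outside $C$. By Lemma~\ref{lem:classes}, $u^M_j\in\Eloc$. The gap $h^M=v-u^M=\min(v-u,M)$ increases to $(v-u)$, which is understood as $+\infty$ on $\{u=-\infty<v\}$. The measure $\ddc\phi_1\wedge\cdots\wedge\ddc\phi_n$ is a bounded Bedford--Taylor product, so monotone convergence handles the left side of \eqref{eq:compact}. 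On the right side, $u^M\downarrow u$ as $M\to\infty$. Lemma~\ref{lem:weighted}, applied with the bounded psh weight $-\phi_1$ and summed over charts against the partition of $\psi$, gives
\[
\int\psi(-\phi_1)(T_M)^n\to\int\psi(-\phi_1)T^n .
\]
It therefore suffices to prove \eqref{eq:compact} for $u^M$, with constant $1/n!$ independent of $M$. Finiteness and independence of $\psi$ for the signed integral follow because $T^n-S^n$ vanishes outside $C$ and $\psi=1$ near $C$.

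\emph{Induction in the bounded case.} Assume $u\le v\le u+M$ and let $h=v-u$. For $0\le k\le n$ I will prove
\[
\frac1{k!}\int h^k\,\ddc\phi_{n-k+1}\wedge\cdots\wedge\ddc\phi_n\wedge\Theta_{n-k}
\le \int\psi(-\phi_{n-k+1})\,(T^{k}-S^{k})\wedge\Theta_{n-k}.
\]
Here $\Theta_{n-k}$ denotes the remaining factors. At the first stage these are bounded currents $\ddc\phi_l$; at later stages they become mixed products of $T$, $S$ and $\ddc\phi_l$, which have local $\mathcal E$-potentials, so Lemma~\ref{lem:ibp} applies to them.

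The step from $k$ to $k-1$ uses \eqref{eq:ibpq} with weight $\phi=\phi_{n-k+1}\le0$. The term $k(k-1)\int\phi\,h^{k-2}\nu$ is $\le0$, because $\nu\ge0$, so it can be dropped. This leaves
\[
k\int(-\phi)\,h^{k-1}(T-S)\wedge T_0 .
\]
I then use the factorisation $T^{k}-S^{k}=(T-S)\wedge\sum_{i}T^{i}\wedge S^{k-1-i}$. The factor $(-\phi)$ is replaced by $1$ where a positive mixed measure is involved. The next $\ddc\phi$ is moved onto the $h$-power by \eqref{eq:ibp1} or \eqref{eq:ibpq}, which lowers the power of $h$ by one and contributes one new factor $T$ or $S$. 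The final step, $k=1$, is \eqref{eq:ibp1} with weight $\phi_1$, and this produces exactly $\int\psi(-\phi_1)(T^n-S^n)$. The factorials come from the coefficients $k$ in \eqref{eq:ibpq}.

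\emph{Main obstacle.} The step I expect to be hardest is the sign bookkeeping in the middle stages. The measure $(T-S)\wedge T_0$ is not positive, so the weight $-\phi$ cannot simply be bounded by $1$. I would first try the Nguyen--Pham device: write $-\phi\,(T-S)=(1-(1+\phi))(T-S)$, and treat the part with weight $1+\phi\ge0$ by a second application of \eqref{eq:ibp1}. That application turns $\int(1+\phi)h^{k-1}(T-S)\wedge T_0$ into $-\int h^{k-1}\ldots\,\ddc\phi$-type terms of the correct sign. Throughout, Lemma~\ref{lem:mixedpolar} guarantees that none of the differences carries polar mass, so the extended integrals of $h^j$ against these measures are unambiguous.
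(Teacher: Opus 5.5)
Your reduction to a bounded gap (truncation $u^M=\max(u,v-M)$, monotone convergence on the left, Lemma~\ref{lem:weighted} on the right) is the same as the paper's and is fine. The gap is in the bounded case, at exactly the point you flag as the main obstacle. The device you propose there does not resolve it. Apply \eqref{eq:ibpq} with the weight $1+\phi$, noting that $\ddc(1+\phi)=\ddc\phi$:
\[
-\int(1+\phi)h^{k-1}(T-S)\wedge T_0=\frac1k\int h^k\,\ddc\phi\wedge T_0-(k-1)\int(1+\phi)h^{k-2}\nu .
\]
The weight $1$ gives $\int h^{k-1}(T-S)\wedge T_0=(k-1)\int h^{k-2}\nu$. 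Substituting both into your splitting $-\phi=1-(1+\phi)$ returns \eqref{eq:ibpq} itself, that is, the quantity $\int h^k\ddc\phi\wedge T_0$ you started from. No term of a useful sign appears. The induction also cannot close as set up. You apply \eqref{eq:ibpq} with the weight $\phi_{n-k+1}$, which is the weight that must survive on the right-hand side of your stage-$k$ inequality. In addition, at $k=1$ your indexing gives $\phi_n$, not $\phi_1$.

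The missing observation is that you need not carry the signed measure through the intermediate steps. Since $(-\phi)h^{k-1}S\wedge T_0\ge0$ and $0\le-\phi\le1$,
\[
\int h^k\,\ddc\phi\wedge T_0\le k\int(-\phi)h^{k-1}(T-S)\wedge T_0\le k\int h^{k-1}\,T\wedge T_0 .
\]
Iterating with $\phi_n,\dots,\phi_2$ gives $\int h^n\ddc\phi_1\wedge\cdots\wedge\ddc\phi_n\le n!\int h\,\ddc\phi_1\wedge T^{n-1}$. The signed right-hand side is recovered only once, at the end. Put $\Sigma=\sum_{l=0}^{n-1}T^l\wedge S^{n-1-l}$. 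The identity $T^n-S^n=(T-S)\wedge\Sigma$ and \eqref{eq:ibp1} give $\int\psi(-\phi_1)(T^n-S^n)=\int h\,\ddc\phi_1\wedge\Sigma\ge\int h\,\ddc\phi_1\wedge T^{n-1}$, because the other summands are positive. In other words, the sign tool is \eqref{eq:ibp1} itself. The measure $(T-S)\wedge T_0$ is not positive, but $\int\psi(-\phi)(T-S)\wedge T_0=\int h\,\ddc\phi\wedge T_0\ge0$ for every positive mixed $T_0$. With this, your stage-$k$ inequality does follow by induction:
\begin{enumerate}[label=(\arabic*),leftmargin=*]
\item use \eqref{eq:ibpq} with the last weight $\phi_n$ and drop $S$;
\item apply stage $k-1$ to $\phi_{n-k+1},\dots,\phi_{n-1}$, with $\Theta$ replaced by $T\wedge\Theta$;
\item absorb the discrepancy $S^{k-1}\wedge(T-S)\wedge\Theta$ by this positivity.
\end{enumerate}
That is only a rearrangement of the paper's argument.
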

\begin{proof}
Suppose first that $0\le h=v-u\le M$. For a mixed current $T_0$ of bidegree $(n-1,n-1)$ and $k\ge2$, Lemma~\ref{lem:ibp} gives
\[
\int h^k\ddc \phi\wedge T_0
\le k\int(-\phi)h^{k-1}(T-S)\wedge T_0
\le k\int h^{k-1}T\wedge T_0.
\]
Iterating with $\phi_n,\ldots,\phi_2$ yields
\[
\int h^n\ddc \phi_1\wedge\cdots\wedge\ddc \phi_n
\le n!\int h\ddc \phi_1\wedge T^{n-1}.
\]
For $n=1$ this step is equality. Put $\Sigma=\sum_{l=0}^{n-1}T^l\wedge S^{n-1-l}$. By \eqref{eq:ibp1} and multilinearity,
\[
\int\psi(-\phi_1)(T^n-S^n)
=\int h\ddc \phi_1\wedge\Sigma
\ge\int h\ddc \phi_1\wedge T^{n-1}.
\]
This proves \eqref{eq:compact} for bounded differences.

For the general case, set $u_j^{(t)}=\max(u_j,v_j-t)$, $t>0$. These functions belong to $\Eloc$, have the same transition functions as $v$, and equal $v_j$ outside $C$. Moreover,
\[
u_j^{(t)}\downarrow u_j,\qquad
0\le v_j-u_j^{(t)}\le t.
\]
Apply the inequality proved above for bounded differences to $u^{(t)},v$. Off the pole sets,
\[
v_j-u_j^{(t)}=\min\{v_j-u_j,t\}\uparrow v_j-u_j.
\]
The measure $\ddc \phi_1\wedge\cdots\wedge\ddc \phi_n$ vanishes on pluripolar sets, so the left side converges monotonically. Lemma~\ref{lem:weighted}, with a fixed smooth cutoff, gives convergence on the right. This proves \eqref{eq:compact}. Finally, $T^n-S^n$ is supported in $C$ by locality, which proves the assertions about the signed integral.
\end{proof}

\section{Proofs of the comparison and domination theorems}
For the functions in Definition~\ref{def:barrier}, write
\[
S_k=\ddc g_j^k,\qquad\mu=T^n,\qquad\nu=S^n,\qquad
\nu_0=R^n,\qquad\nu_k=S_k^n.
\]
Since $u_j\le g_j^k\le\max(u_j,v_j)$, Lemma~\ref{lem:classes} gives $g_j^k\in\Eloc$. By \eqref{eq:glue}, the currents $S_k$ agree on overlaps. The set
\[
A_k=\{u_j<v_j+h_j^k\}=\{u_j<g_j^k\}
\]
is independent of the chart, increases with $k$, and is contained in $C_k$.

The upper regularization of the increasing psh limit of $h_j^k$ is zero, because the limit is zero almost everywhere. An increasing locally bounded-above sequence of psh functions agrees with its regularized limit outside a pluripolar set \cite[Theorem 7.1]{BT82}. Taking the union over the countable cover and adjoining the pole sets of $u$ and $v$, we obtain a locally pluripolar set $E_0$ outside which all relevant potentials are finite and $h_j^k\uparrow0$. If $x\in A\setminus E_0$, then $v_j(x)-u_j(x)>0$, so $h_j^k(x)>u_j(x)-v_j(x)$ for all sufficiently large $k$. If $x\notin A$, the inequality $h_j^k\le0$ implies $x\notin A_k$. Therefore
\begin{equation}
A_k\uparrow A\text{ on }X\setminus E_0,\qquad
(g^k-u)\uparrow(v-u)_+\text{ there}.\label{eq:increase}
\end{equation}

We claim that
\begin{equation}
(\nu_k)_{\np}\ge\ind_{X\setminus A_k}\mu_{\np}
+\ind_{A_k}\nu_{\np}.\label{eq:nonpolar}
\end{equation}
Fix $j,k$ and work on a smaller hyperconvex coordinate neighborhood in $\Omega_j$, normalizing $u_j,v_j$ by a common constant. Set
\[
h_l=\max(h_j^k,-l),\qquad
 g_l=\max(u_j,v_j+h_l).
\]
Since $h_l$ is bounded psh, $v_j+h_l\in\mathcal E$. The functions $g_l\in\mathcal E$ decrease to $g_j^k\in\mathcal E$ and have the common minorant $u_j$. Moreover,
\[
A_k\subset\{u_j<v_j+h_l\}
\]
on this neighborhood. Plurifine locality and positivity of mixed products give the measure inequalities
\[
(\ddc g_l)^n
\ge\ind_{A_k}\bigl(\ddc(v_j+h_l)\bigr)^n
\ge\ind_{A_k}(\ddc v_j)^n.
\]
Testing against nonnegative continuous functions with compact support and using decreasing convergence yields
\[
\nu_k\ge\ind_{A_k}\nu.
\]
Since $v_j$ is finite on $A_k$, the measure on the right vanishes on pluripolar sets. Thus $(\nu_k)_{\np}\ge\ind_{A_k}\nu_{\np}$. On the complement of $A_k$, apply Lemma~\ref{lem:max} to $u_j,g_j^k$. Their maximum is $g_j^k$, and $\{u_j\ge g_j^k\}=X\setminus A_k$ on the chart, so
\[
\ind_{X\setminus A_k}(\nu_k)_{\np}
\ge\ind_{X\setminus A_k}\mu_{\np}.
\]
Combining the inequalities on these two disjoint sets proves \eqref{eq:nonpolar} locally and hence globally. The comparison on pluripolar sets in (b) gives
\begin{equation}
\mu_{\pp}\ge(\nu_k)_{\pp}\ge(\nu_0)_{\pp}.\label{eq:polar}
\end{equation}

\begin{proof}[Proof of Theorem~\ref{th:comparison}]
Set $h=-\phi_1\ge0$ and
\[
I_k=\frac1{n!}\int_X(g^k-u)^n\mu_\phi.
\]
Proposition~\ref{prop:compact} gives
\begin{equation}
I_k\le\int_{D_k}h(\mu-\nu_k).\label{eq:Ij}
\end{equation}
To interpret this expression, choose a smooth cutoff supported in $D_k$ and equal to one near $C_k$. The signed measure $\mu-\nu_k$ is supported in $C_k$, so the cutoff integral equals the displayed integral.

All measures have finite mass on $D_k$. Combining \eqref{eq:nonpolar}--\eqref{eq:Ij} gives
\[
I_k+\int_{A_k}h\nu_{\np}+\int_{D_k}h(\nu_0)_{\pp}
\le\int_{A_k}h\mu_{\np}+\int_{D_k}h\mu_{\pp}.
\]
The measure $(\nu_0)_{\pp}$ is carried by $E$, and $\mu_{\pp}$ is carried by $\{u=-\infty\}\subset A\cup E$. Since $v$ is finite on $A$, $\nu|_A=\nu_{\np}|_A$. Moreover,
\[
\int_{A_k}h\mu_{\np}+\int_{D_k}h\mu_{\pp}
\le\int_A h\mu+\int_{D_k\cap E}h\mu.
\]
On $E$, both $\nu_0$ and $\mu$ equal their polar restrictions. Consequently,
\[
I_k+\int_{A_k}h\nu+\int_{D_k\cap E}h\nu_0
\le\int_A h\mu+\int_{D_k\cap E}h\mu.
\]
If the right side of \eqref{eq:full} is infinite, the inequality holds in the extended sense. Otherwise $\int_A h\mu$ and $\int_E h\mu$ are finite. By \eqref{eq:increase}, monotone convergence applies to $I_k$ and to the $\nu$-integrals because $\mu_\phi$ and $\nu|_A$ vanish on pluripolar sets. The sets $D_k\cap E$ increase to $E$, so monotone convergence applies to those terms as well. Passing to the limit in the last inequality proves \eqref{eq:full}.

If $u\preceq_{\Cap}v$ locally, Lemma~\ref{lem:max} gives $(\nu_k)_{\pp}=\mu_{\pp}$. Cancel these finite polar measures on $D_k$ before taking limits. Then
\[
I_k+\int_{A_k}h\nu_{\np}\le\int_{A_k}h\mu_{\np}.
\]
The comparison in (c) also gives $\mu_{\pp}\le\nu_{\pp}$, hence $\mu_{\pp}|_A=0$. Monotone convergence now proves \eqref{eq:clean}.
\end{proof}

\begin{lemma}\label{lem:positive}
If $\mu\le\nu$ on $A$ and $\mu_{\pp}=(\nu_0)_{\pp}$, then $\nu_k\ge\mu$ on $X$ for every $k$.
\end{lemma}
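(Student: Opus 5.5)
The plan is to split every measure into its nonpolar and polar parts and prove $\nu_k\ge\mu$ separately for each part, using the estimates \eqref{eq:nonpolar} and \eqref{eq:polar} already established for $\nu_k=S_k^n$.

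\emph{Nonpolar parts.} By \eqref{eq:nonpolar},
\[
(\nu_k)_{\np}\ge\ind_{X\setminus A_k}\mu_{\np}+\ind_{A_k}\nu_{\np}.
\]
\begin{itemize}
\item Off $A_k$ the right side is already $\mu_{\np}$.
\item On $A_k$ we use $A_k\subset A$. By hypothesis $\mu\le\nu$ on $A$, so restricting to nonpolar parts gives $\ind_{A_k}\mu_{\np}\le\ind_{A_k}\nu_{\np}$.
\item Restriction to a Borel set commutes with the decomposition, since both parts are obtained by multiplying with indicator functions of a fixed pluripolar carrier.
\end{itemize}
Adding the two pieces yields $(\nu_k)_{\np}\ge\mu_{\np}$ on $X$.

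\emph{Polar parts.} From \eqref{eq:polar} we have $\mu_{\pp}\ge(\nu_k)_{\pp}\ge(\nu_0)_{\pp}$. The hypothesis $\mu_{\pp}=(\nu_0)_{\pp}$ squeezes this chain, so $(\nu_k)_{\pp}=\mu_{\pp}$.

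Summing the nonpolar inequality and the polar equality gives $\nu_k\ge\mu$.

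\emph{Where care is needed.} The only delicate points are well-definedness, and all of it is already in place.
\begin{itemize}
\item Every measure involved is a locally finite Radon measure. The inequalities are local and agree on overlaps, because $S_k$ and $T$ are global by \eqref{eq:glue} and Lemma~\ref{lem:classes}.
\item The set $A_k=\{u_j<g_j^k\}$ is chart-independent.
\item The inclusion $A_k\subset A$ holds everywhere, since $h_j^k\le0$; it is not merely an almost-everywhere statement.
\end{itemize}

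So I expect no real obstacle, beyond checking that "$\mu\le\nu$ on $A$" passes to nonpolar parts. This holds because $v$ is finite on $A$, so $\nu|_A=\nu_{\np}|_A$. Hence $\mu_{\np}|_A\le\mu|_A\le\nu|_A=\nu_{\np}|_A$, which gives the needed comparison directly.
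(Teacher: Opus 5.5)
Your proof is correct and matches the paper's argument: you get $(\nu_k)_{\np}\ge\mu_{\np}$ from \eqref{eq:nonpolar} combined with $A_k\subset A$ and the hypothesis on $A$, and $(\nu_k)_{\pp}=\mu_{\pp}$ by squeezing \eqref{eq:polar} with the assumed equality, then add the two. Your remark that $\nu|_A=\nu_{\np}|_A$ because $v$ is finite on $A$ spells out a step the paper leaves implicit.
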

\begin{proof}
Since $A_k\subset A$, \eqref{eq:nonpolar} gives $(\nu_k)_{\np}\ge\mu_{\np}$. On the polar parts, \eqref{eq:polar} and the assumed equality imply $(\nu_k)_{\pp}=\mu_{\pp}$. Adding the two inequalities gives the assertion, including the common pole set.
\end{proof}

\begin{lemma}\label{lem:zeta}
Suppose that $\rho$ satisfies \eqref{eq:rho}. Then
\[
\zeta=e^\rho\in\PSH(X)\cap L^\infty_{\mathrm{loc}}(X),
\qquad (\ddc\zeta)^n\ge g\,dV,
\]
where $e^{-\infty}=0$ and $g\in L^1_{\mathrm{loc}}(X,dV)$ is positive almost everywhere. Thus \eqref{eq:rho} is equivalent to the existence of a locally bounded psh function with this property.
\end{lemma}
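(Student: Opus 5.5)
Psh-ness and local boundedness of $\zeta=e^\rho$ come for free, since $e^t$ is convex and increasing: $\zeta$ is psh (with $e^{-\infty}=0$) and $0\le\zeta\le e^{\sup_K\rho}$ on compact $K$. The real content is the density bound, which I will prove locally and then patch, because membership of $g\,dV$ below $(\ddc\zeta)^n$ is a local property of measures. Fix a small bounded hyperconvex coordinate neighborhood $D$ on which $\rho-C\in\mathcal E(D)$. Everything reduces to the pointwise formula
\[
\ddc\zeta=e^\rho\bigl(\ddc\rho+d\rho\wedge d^c\rho\bigr)\ge e^\rho\,\ddc\rho .
\]
This should give $(\ddc\zeta)^n\ge e^{n\rho}(\ddc\rho)^n$ on $\{\rho>-\infty\}$. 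Then $g=e^{n\rho}f$ works, with $g>0$ almost everywhere, because $\{\rho=-\infty\}$ is pluripolar and hence Lebesgue null, and $f>0$ almost everywhere. Local integrability holds since $e^{n\rho}$ is locally bounded.

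To make this rigorous I use truncations $\rho_l=\max(\rho,-l)$, which decrease to $\rho$. For bounded psh $\rho_l$, Bedford--Taylor calculus gives $\ddc e^{\rho_l}\ge e^{\rho_l}\ddc\rho_l$ as currents. To justify this, approximate $\rho_l$ by smooth psh functions decreasing to it and pass to the limit using the continuity of bounded products under decreasing sequences.

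Taking $n$-th powers needs care because the weight is not constant. I will use the local comparison $\ddc e^{\rho_l}\ge e^{-s}\ddc\rho_l$ on the plurifine open set $\{\rho_l>-s\}$. Equivalently, $e^{\rho_l}-e^{-s}\rho_l$ is psh there, since its $\ddc$ is $(e^{\rho_l}-e^{-s})\ddc\rho_l+e^{\rho_l}d\rho_l\wedge d^c\rho_l\ge0$ where $\rho_l>-s$. Plurifine locality and multilinearity of bounded products then give
\[
\ind_{\{\rho_l>-s\}}(\ddc e^{\rho_l})^n\ge e^{-ns}\ind_{\{\rho_l>-s\}}(\ddc\rho_l)^n .
\]
For $l>s$, plurifine locality (item (d) and the Preliminaries) identifies $(\ddc\rho_l)^n$ with $(\ddc\rho)^n$ on $\{\rho>-s\}$. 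The same reasoning identifies $(\ddc e^{\rho_l})^n$ with $(\ddc\zeta)^n$ there, since $e^{\rho_l}=\zeta$ on that set and both are bounded psh. We therefore get $(\ddc\zeta)^n\ge e^{-ns}\ind_{\{\rho>-s\}}f\,dV$. A sharper version uses a partition of levels $-s_{i+1}<\rho\le -s_i$, or simply takes the supremum over $s$. Letting $s\to\infty$ through a countable sequence and noting that the sets $\{\rho>-s\}$ exhaust $\{\rho>-\infty\}$, the sets $\{\rho>-s\}\cap\{-s+1\ge\rho\}$ give a countable disjoint decomposition. This yields $(\ddc\zeta)^n\ge g\,dV$ with $g=\sum_s e^{-ns}\ind_{\{-s<\rho\le -s+1\}}f$, which is positive almost everywhere and locally integrable.

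The main obstacle I expect is the plurifine-local inequality with a nonconstant weight. This requires that the set $\{\rho_l>-s\}$ be plurifine open, which holds because $\rho_l$ is psh, and that the comparison be local in the plurifine topology. I will handle it exactly by the psh-ness of $e^{\rho_l}-e^{-s}\rho_l$ on that set, using the fact that a psh difference of bounded functions dominating yields domination of bounded Monge--Amp\`ere measures. Rather than claiming that outright, I will write $\ddc e^{\rho_l}=e^{-s}\ddc\rho_l+\Theta$ with $\Theta\ge0$ on the set, expand $(\ddc e^{\rho_l})^n$ by multilinearity, and drop the nonnegative mixed terms.

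The converse direction of the final equivalence is immediate. If a locally bounded psh function $\zeta$ with $(\ddc\zeta)^n\ge g\,dV$ exists, then $\zeta$ is itself in $\PSH(X)\cap\Eloc(X)$, because bounded psh functions are locally in $\mathcal E$, so it serves as $\rho$ in \eqref{eq:rho}.
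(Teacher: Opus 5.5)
Your overall plan matches the paper's. You truncate to $\rho_l=\max(\rho,-l)$, bound $(\ddc e^{\rho_l})^n$ below by a constant multiple of $(\ddc\rho_l)^n$, transfer to $\{\rho>-s\}$ by locality (Bedford--Taylor locality for $\zeta$, locality of maxima in $\mathcal E$ for $\rho$), and exhaust $\{\rho>-\infty\}$. The gap is at the step you flag as the main obstacle. Because you take $l>s$, you only claim $e^{\rho_l}-e^{-s}\rho_l$ is psh on the plurifine open set $O=\{\rho_l>-s\}$, which is not Euclidean open. The fact that a bounded psh difference forces domination of Monge--Amp\`ere measures concerns psh functions on open sets, so it does not apply here. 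Your fallback does not close the gap either. Knowing $\ind_O\Theta\ge0$ for $\Theta=\ddc(e^{\rho_l}-e^{-s}\rho_l)$ tells you nothing about the sign of the mixed terms $\ind_O\,\Theta^k\wedge(\ddc\rho_l)^{n-k}$. These products are defined through the potentials, not by wedging the restricted current pointwise, and their positivity comes from plurisubharmonicity on an open set. Dropping them as nonnegative assumes exactly what has to be proved.

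The missing idea is to make the comparison function globally psh. The paper takes $s=l$. Since $\rho_l\ge-l$ everywhere and $t\mapsto e^t-e^{-l}t$ is convex and nondecreasing on $[-l,\infty)$ (extended constantly to the left), $w=e^{\rho_l}-e^{-l}\rho_l$ is bounded psh on the whole chart. Writing $e^{\rho_l}=e^{-l}\rho_l+w$ and expanding by multilinearity gives $(\ddc e^{\rho_l})^n\ge e^{-nl}(\ddc\rho_l)^n$ on the chart. Locality is used only afterwards, to replace $e^{\rho_l},\rho_l$ by $\zeta,\rho$ on $\{\rho>-l\}$.

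If you want to keep $s<l$, use $\chi_s(\rho_l)$, where $\chi_s(t)=e^t-e^{-s}t$ for $t\ge-s$ and $\chi_s$ is constant for $t<-s$. This function is globally psh and equals $e^{\rho_l}-e^{-s}\rho_l$ on $O$. Plurifine locality then legitimately gives $\ind_O(\ddc e^{\rho_l})^n\ge e^{-ns}\ind_O(\ddc\rho_l)^n$.

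Another valid repair is your first idea, with the $n$-th power taken before the limit. For smooth psh $\rho^\epsilon\downarrow\rho_l$ you have $\ddc e^{\rho^\epsilon}\ge e^{\rho^\epsilon}\ddc\rho^\epsilon\ge0$ as forms, hence $(\ddc e^{\rho^\epsilon})^n\ge e^{n\rho^\epsilon}(\ddc\rho^\epsilon)^n$. Bedford--Taylor convergence for decreasing sequences, with the bounded psh weight $e^{n\rho^\epsilon}$, then gives $(\ddc e^{\rho_l})^n\ge e^{n\rho_l}(\ddc\rho_l)^n$. This yields your sharper density $g=e^{n\rho}f$ directly.

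With any of these repairs the rest of your argument goes through, including the converse. One small fix remains: your levels $-s<\rho\le-s+1$ must run over all $s\in\mathbb Z$, or you must normalize $\rho\le0$ locally, since $\rho$ may be positive.
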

\begin{proof}
Composition with the exponential gives a psh function $\zeta\ge0$, locally bounded above because $\rho$ is locally bounded above. For $l\ge1$, put
\[
\rho_l=\max(\rho,-l),\qquad \zeta_l=e^{\rho_l}.
\]
The function $t\mapsto e^t-e^{-l}t$ on $[-l,\infty)$, extended constantly to $(-\infty,-l)$, is convex and nondecreasing. Hence $\zeta_l-e^{-l}\rho_l$ is psh. Positivity and multilinearity of locally bounded Bedford--Taylor products give
\[
(\ddc\zeta_l)^n\ge e^{-nl}(\ddc\rho_l)^n.
\]
On $\{\rho>-l\}$, locality for bounded products and for maxima in $\mathcal E$ yields
\[
\begin{aligned}
\ind_{\{\rho>-l\}}(\ddc\zeta)^n
&=\ind_{\{\rho>-l\}}(\ddc\zeta_l)^n\\
&\ge e^{-nl}\ind_{\{\rho>-l\}}(\ddc\rho)^n
\ge e^{-nl}\ind_{\{\rho>-l\}}f\,dV.
\end{aligned}
\]
To replace $(\ddc\rho_l)^n$ by $(\ddc\rho)^n$ on $\{\rho>-l\}$, apply \cite[Theorem 4.1]{NP09} on smaller hyperconvex coordinate neighborhoods after a common constant shift. Let $g$ be the density of the absolutely continuous part of $(\ddc\zeta)^n$ with respect to $dV$. The displayed inequality gives $g\ge e^{-nl}f$ almost everywhere on $\{\rho>-l\}$. These sets increase to $\{\rho>-\infty\}$, whose complement has volume zero. Thus $g>0$ almost everywhere. The converse follows because locally bounded psh functions belong to $\Eloc$.
\end{proof}

\begin{lemma}\label{lem:compact-dom}
Suppose that $X$ satisfies \eqref{eq:rho}. Let $u\le v$ be families of local $\mathcal E$-potentials with the same transition functions, equal outside a compact set $K\cc X$. If $T^n\le S^n$, where $T=\ddc u_j$ and $S=\ddc v_j$, then $u=v$.
\end{lemma}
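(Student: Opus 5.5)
We will apply Proposition~\ref{prop:compact} with weights built from the function $\zeta=e^\rho$ of Lemma~\ref{lem:zeta}. The aim is to show that $\int(v-u)^n(\ddc\zeta)^n=0$ near $K$. The density condition of Lemma~\ref{lem:zeta} then forces $v-u=0$ almost everywhere, and hence everywhere.

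First we fix the weights. Choose a neighborhood $\Omega\cc X$ of $K$. Since $\zeta$ is psh, nonnegative and bounded on $\overline\Omega$, there is a constant $c>0$ with $0\le c\zeta\le1$ there. Set $\phi_1=\cdots=\phi_n=c\zeta-1$. These functions are psh on $\Omega$ and satisfy $-1\le\phi_l\le0$, so Proposition~\ref{prop:compact} applies to $u\le v$, which agree outside $K$. The right side of \eqref{eq:compact} is $\int_\Omega\psi(-\phi_1)(T^n-S^n)$. The signed measure $T^n-S^n$ is supported in $K$, where $\psi=1$, and by hypothesis it is $\le0$. Since $-\phi_1\ge0$, this integral is $\le0$. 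Therefore
\[
\frac{c^n}{n!}\int_\Omega(v-u)^n(\ddc\zeta)^n\le0 .
\]
The left side is a nonnegative extended integral. Its measure vanishes on pluripolar sets, so the values of $v-u$ on pole sets are irrelevant, and the integral is $0$.

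Next we convert this vanishing into $u=v$. By Lemma~\ref{lem:zeta}, $(\ddc\zeta)^n\ge g\,dV$ with $g>0$ almost everywhere. Hence $(v-u)^n g=0$ almost everywhere on $\Omega$, so $v_j=u_j$ $dV$-almost everywhere on each $\Omega_j\cap\Omega$. Outside $K$ the two families agree by assumption. Two psh functions that agree almost everywhere are equal, since each is the limit of its mean values over small balls. Thus $u_j=v_j$ on every $\Omega_j$.

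The main point to check is that Proposition~\ref{prop:compact} really accepts the weights $\phi_l$: they are only bounded psh functions defined near $K$, not global functions on $X$, and this is exactly why the proposition was stated for a neighborhood $\Omega$. One should also make sure that $(\ddc\phi_1)\wedge\cdots\wedge(\ddc\phi_n)=c^n(\ddc\zeta)^n$ as bounded Bedford--Taylor products, which is immediate. A connectedness issue does not arise, because equality is obtained pointwise almost everywhere on all of $\Omega$ and not through a maximum principle.
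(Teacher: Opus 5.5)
Your proof is correct and follows the paper's argument essentially step for step: the same weight $\phi=c\zeta-1$ (the paper writes $\zeta/M-1$), the same application of Proposition~\ref{prop:compact} with all $\phi_l$ equal, and the same conclusion from the positive density of $(\ddc\zeta)^n$ together with the fact that psh functions agreeing almost everywhere are equal. The only point you leave implicit is that the pole set of $u$ has volume zero, so that $(v-u)^n g=0$ almost everywhere yields $u_j=v_j$ almost everywhere; the paper states this explicitly.
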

\begin{proof}
The sets $\{u_j=-\infty\}$ define a locally pluripolar set $E$ on $X$. Since $u_j\le v_j$, both potentials are finite outside $E$. Their difference defines a nonnegative Borel function $h=v_j-u_j$ on $X\setminus E$. Set $h=0$ on $E$. Then $h=0$ outside $K$. Monge-Amp\`ere measures of locally bounded psh functions vanish on $E$. Thus replacing the extended difference in Proposition~\ref{prop:compact} by $h$ leaves its left-hand side unchanged; its right-hand side retains the full measures $T^n$ and $S^n$.

Let $\zeta=e^\rho$ and $g>0$ almost everywhere be as in Lemma~\ref{lem:zeta}. Choose an open set $D\cc X$ containing $K$ and a constant $M>\max\{1,\sup_D\zeta\}$. Then
\[
\phi=\frac{\zeta}{M}-1\in\PSH(D),\qquad -1\le\phi\le0.
\]
Choose $\psi\in C_c^\infty(D)$ with $0\le\psi\le1$ and $\psi=1$ near $K$. Proposition~\ref{prop:compact}, with all test functions equal to $\phi$, gives
\[
0\le\frac1{n!M^n}\int_D h^n(\ddc\zeta)^n
\le\int_D\psi(-\phi)(T^n-S^n)\le0.
\]
The signed integral is finite because its weight is bounded and compactly supported. Since $(\ddc\zeta)^n\ge g\,dV$ and $g>0$ almost everywhere, we obtain $h=0$ almost everywhere on $D$. The set $E$ has volume zero, and the potentials agree outside $K$. Hence $u_j=v_j$ almost everywhere on each $\Omega_j$, and equality of their psh representatives gives $u_j=v_j$ everywhere.
\end{proof}

\begin{proof}[Proof of Theorem~\ref{th:dom}]
Fix $k$. The functions $g_j^k=\max(u_j,v_j+h_j^k)$ have the same transition functions as $u_j$ and equal $u_j$ outside $C_k$. They belong to $\Eloc$ by Lemma~\ref{lem:classes}, since they are psh and satisfy $g_j^k\ge u_j$. Lemma~\ref{lem:positive} gives $\nu_k\ge\mu$. Applying Lemma~\ref{lem:compact-dom} to $u$ and $g^k$ yields $g_j^k=u_j$ on every $\Omega_j$. Consequently,
\[
u_j\ge v_j+h_j^k\quad\text{on }\Omega_j.
\]
Letting $k\to\infty$ gives $u_j\ge v_j$ almost everywhere, since $h_j^k\uparrow0$ almost everywhere. The psh functions $\max(u_j,v_j)$ and $u_j$ therefore agree almost everywhere. Equality of their psh representatives gives $u_j\ge v_j$ everywhere, including at poles. Under the local assumption $u\preceq_{\Cap}v$, Lemma~\ref{lem:max} applied to $h=\max(u_j,v_j)$ yields \eqref{eq:balance}.
\end{proof}

\begin{proof}[Proof of Theorem~\ref{th:unique}]
Assuming $(R^n)_{\pp}=\mu_{\pp}$, apply Theorem~\ref{th:dom} with the functions $h_j^k$ for $(u,v)$ to get $u\ge v$. Interchanging $u,v$ and using the functions for $(v,u)$ gives $v\ge u$.

For the capacity alternative, on a neighborhood where $u\preceq_{\Cap}v$, Lemma~\ref{lem:max} gives $(R^n)_{\pp}=(T^n)_{\pp}$. On a neighborhood with the reverse order it gives $(R^n)_{\pp}=(S^n)_{\pp}$. Since $T^n=S^n=\mu$, these local identities agree and give $(R^n)_{\pp}=\mu_{\pp}$ globally. The first part applies.
\end{proof}

\begin{corollary}\label{cor:zero}
Suppose that $X$ satisfies \eqref{eq:rho}. Let $u,v$ be families of local $\mathcal E$-potentials with the same transition functions such that $u\le v$, $u=v$ outside a compact set, and $(\ddc u)^n=0$. Then $u=v$.
\end{corollary}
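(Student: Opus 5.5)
The corollary should follow from Lemma~\ref{lem:compact-dom}, applied to the pair $u\le v$. Those hypotheses are already given: the two families have the same transition functions, $u\le v$, and $u=v$ outside a compact set $K\cc X$. So the only thing left to check is the measure inequality $T^n\le S^n$, where $T=\ddc u_j$ and $S=\ddc v_j$. This is immediate, because $T^n=(\ddc u)^n=0$ and $S^n$ is a positive measure, being a local Cegrell Monge-Amp\`ere measure. Hence $T^n=0\le S^n$ on all of $X$, and Lemma~\ref{lem:compact-dom} yields $u=v$.

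To make the argument transparent, it also helps to trace how the inequality of Proposition~\ref{prop:compact} degenerates here. Take $\zeta=e^\rho$ from Lemma~\ref{lem:zeta}, choose $D\cc X$ containing $K$, and set $\phi=\zeta/M-1$ with $M>\max\{1,\sup_D\zeta\}$. The right-hand side of \eqref{eq:compact} becomes
\[
\int_D\psi(-\phi)(0-S^n)\le0,
\]
since $-\phi\ge0$ and $\psi\ge0$. Consequently $\int_D h^n(\ddc\zeta)^n=0$, where $h=v-u$ off the pole set. Because $(\ddc\zeta)^n\ge g\,dV$ with $g>0$ almost everywhere, it follows that $h=0$ almost everywhere. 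Equality of the psh representatives then gives $u_j=v_j$ everywhere.

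The only point needing care is the polar mass of $S^n$. The hypothesis is the full measure identity $(\ddc u)^n=0$, including any polar part, and Lemma~\ref{lem:compact-dom} uses the full measures $T^n$ and $S^n$ on its right-hand side. So no balance condition of the type \eqref{eq:balance} is required, and no finiteness issue arises: the signed integral is finite by Proposition~\ref{prop:compact}, since $S^n=T^n-(T^n-S^n)$ is supported where $T^n-S^n$ is, namely in $K$ by locality. I do not expect a genuine obstacle; the proof is a direct specialization of Lemma~\ref{lem:compact-dom}.
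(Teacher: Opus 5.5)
Your proposal is correct and matches the paper's proof exactly: since $(\ddc u)^n=0\le(\ddc v)^n$, Lemma~\ref{lem:compact-dom} applies directly. Your later paragraphs only unwind the proof of that lemma; they are accurate but not needed.
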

\begin{proof}
Since $0=(\ddc u)^n\le(\ddc v)^n$, Lemma~\ref{lem:compact-dom} applies.
\end{proof}

Recall the definition of Cegrell's class $\mathcal N(\Omega)$ \cite{Cegrell08}. Let $(D_k)$ be a fundamental exhaustion of a bounded hyperconvex domain $\Omega$ by smoothly bounded strictly pseudoconvex domains, with $D_k\cc D_{k+1}$ and $\bigcup_{k=1}^{\infty}D_k=\Omega$. For $h\in\mathcal E(\Omega)$, set
\[
h_k=\left(\sup\{\psi\in\PSH^-(\Omega):\psi\le h
\text{ on }\Omega\setminus D_k\}\right)^*,\qquad
\widetilde h=\left(\lim_{k\to\infty}h_k\right)^*.
\]
Here the star denotes upper semicontinuous regularization. Then
\[
\mathcal N(\Omega)=\{h\in\mathcal E(\Omega):\widetilde h=0\}.
\]
The following proposition gives a sufficient condition for Definition~\ref{def:barrier}.
\begin{proposition}\label{prop:N}
Let $\Omega\subset\mathbb C^n$ be a bounded hyperconvex domain, and let $u=(u_j)$ and $v=(v_j)$ be families of local $\mathcal E$-potentials on a common open cover $(\Omega_j)$ of $\Omega$, with the same pluriharmonic transition functions. Suppose that $h\in\mathcal N(\Omega)$ and that there is a compact set $K\Subset\Omega$, independent of $j$, such that $u_j\ge v_j+h|_{\Omega_j}$ on $\Omega_j\setminus K$ for every $j$. Then $(u,v)$ satisfies the condition in Definition~\ref{def:barrier}.
\end{proposition}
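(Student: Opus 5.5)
The natural choice is $h_j^k=h_k|_{\Omega_j}$, where $h_k$ are the envelopes in the definition of $\mathcal N(\Omega)$:
\[
h_k=\left(\sup\{\psi\in\PSH^-(\Omega):\psi\le h\text{ on }\Omega\setminus D_k\}\right)^*.
\]
These come from global psh functions, so condition (iii) of Definition~\ref{def:barrier} holds automatically. Indeed, $g_j^k=\max(u_j,v_j+h_k)$ and, since both families have the same transitions,
\[
\max(u_l+h_{jl},\,v_l+h_k+h_{jl})=g_l^k+h_{jl}.
\]
So only (i) and (ii) need checking.

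For (i), first note $h_k\le0$, so $h_k\in\PSH^-(\Omega)$. Since $h$ is itself admissible in the supremum, $h\le h_k$. As $k$ increases, the constraint set $\Omega\setminus D_k$ shrinks, so the admissible family grows and $h_k$ increases. Its regularized limit is $\widetilde h=0$ because $h\in\mathcal N(\Omega)$. By \cite[Theorem 7.1]{BT82}, the increasing limit $\lim h_k$ agrees with $(\lim h_k)^*=0$ outside a pluripolar set, which has Lebesgue measure zero. Hence $h_k\uparrow0$ almost everywhere, and so $h_j^k\uparrow0$ almost everywhere.

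For (ii), the main point is that $h_k=h$ off a compact set. Since $h$ is admissible, $h_k\ge h$ everywhere. Conversely, every admissible $\psi$ satisfies $\psi\le h$ on the open set $\Omega\setminus\overline{D_k}$. The pointwise supremum is therefore $\le h$ there, and since $h$ is upper semicontinuous, the regularization is also $\le h$ on that open set. Thus $h_k=h$ on $\Omega\setminus\overline{D_k}$. Now put $C_k=K\cup\overline{D_k}$, which is compact in $\Omega$ because $D_k\cc\Omega$. On $\Omega_j\setminus C_k$ we have $h_k=h$, so the hypothesis gives
\[
u_j\ge v_j+h|_{\Omega_j}=v_j+h_k.
\]
This is \eqref{eq:boundary}.

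The only delicate step is the identification $h_k=h$ outside $\overline{D_k}$, specifically checking that upper regularization does not raise the envelope above $h$ on the open complement. The upper semicontinuity argument above handles this. Everything else is monotonicity and the negligibility of pluripolar sets. Membership $g_j^k\in\Eloc$ is not required by Definition~\ref{def:barrier}, but it follows anyway from Lemma~\ref{lem:classes}, since $u_j\le g_j^k$.
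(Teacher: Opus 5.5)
Your proof is correct and follows the paper's argument: the same choice $h_j^k=h_k|_{\Omega_j}$ of the $\mathcal N$-envelopes, the identification $h_k=h$ on $\Omega\setminus\overline{D_k}$ via upper semicontinuity of $h$, the compact sets $C_k=K\cup\overline{D_k}$, and gluing from the globality of $h_k$. The only difference is that you derive $h_k\uparrow0$ almost everywhere directly from $\widetilde h=0$ and the Bedford--Taylor theorem on negligible sets, whereas the paper quotes Cegrell's work for this.
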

\begin{proof}
Let $h_k$ be the functions used in the definition of $\widetilde h$. The defining properties of $\mathcal N$ give $h\le h_k\le0$, $h_k\in\mathcal E$, and $h_k\uparrow0$ outside a pluripolar set; see \cite{Cegrell08}. Outside $\overline{D_k}$ the envelope equals $h$: locally all competitors are bounded above by the upper semicontinuous function $h$, and $h$ itself is admissible. Set $h_j^k=h_k|_{\Omega_j}$. The inequality in \eqref{eq:boundary} holds outside $K\cup\overline{D_k}$; equation \eqref{eq:glue} follows because $h_k$ is global and $u,v$ have the same transition functions.
\end{proof}

\begin{example}\label{ex:blowup}
Let $X$ be the blowup of the unit ball $B\subset\mathbb C^2$ at the origin:
\[
X=\{(w,[\xi_1:\xi_2])\in B\times\mathbb P^1:
w_1\xi_2=w_2\xi_1\}.
\]
Write $\pi:X\to B$ and $\tau:X\to\mathbb P^1$ for the projections, and $E=\pi^{-1}(0)$ for the exceptional curve. The function $\rho=|\pi|^2$ is smooth and psh. In a blowup chart where $\pi(z_1,z_2)=(z_1,z_1z_2)$, we have
\[
\rho=|z_1|^2(1+|z_2|^2),\qquad
\det\left(\frac{\partial^2\rho}{\partial z_j\partial\bar z_k}\right)=|z_1|^2.
\]
Thus $(\ddc\rho)^2$ has positive density off $E$, and \eqref{eq:rho} holds because $E$ has volume zero. However, $X$ has no global strictly psh function, since its restriction to $E\simeq\mathbb P^1$ would be strictly subharmonic on a compact curve.

Let $\omega_{\mathrm{FS}}$ be a Fubini--Study form on $\mathbb P^1$. For $\epsilon>0$, the form
\[
T=\ddc\rho+\epsilon\tau^*\omega_{\mathrm{FS}}
\]
is K\"ahler: it is the restriction of a product K\"ahler form under the embedding $X\hookrightarrow B\times\mathbb P^1$. It has no global psh potential, since
\[
\int_E T=\epsilon\int_{\mathbb P^1}\omega_{\mathrm{FS}}>0.
\]
Indeed, a potential of the smooth form $T$ would be smooth locally up to a pluriharmonic function, and Stokes' theorem on $E$ would give zero. Choose smooth local psh potentials $u_j$ of $T$. For a nonzero real $\chi\in C_c^\infty(X)$ and sufficiently small $|\delta|>0$, the form $S=T+\delta\ddc\chi$ is positive, with local potentials $v_j=u_j+\delta\chi$. These families have the same pluriharmonic transition functions and agree outside $\operatorname{supp}\chi$. Since $\int_E S=\int_E T>0$, the current $S$ also has no global psh potential. Taking $h_j^k=0$ verifies Definition~\ref{def:barrier} in both directions. Theorem~\ref{th:comparison} therefore applies to this pair of currents without global psh potentials; one may take $\phi_1=\phi_2=\rho-1$.
\end{example}

Some geometric hypothesis is necessary for domination and uniqueness. On $X=\mathbb P^1\times\mathbb C$, the functions
\[
u(x,z)=|z|^2,\qquad v(x,z)=\max\{|z|^2,1\}
\]
are locally bounded psh, satisfy $u\le v$, and agree outside the compact set $\mathbb P^1\times\{|z|\le1\}$. Both Monge-Amp\`ere measures vanish, since these functions are pulled back from the one-dimensional factor, but $u\ne v$. Every locally bounded global psh function on this product is constant on each $\mathbb P^1$ fiber and is therefore pulled back from $\mathbb C$, so its Monge-Amp\`ere square vanishes. By Lemma~\ref{lem:zeta}, this manifold cannot satisfy \eqref{eq:rho}.

\section{An application to maximal plurisubharmonic functions}\label{sec:CD}

We use the following equivalent definition of maximality: $u\in\PSH(\Omega)$ is maximal if every $v\in\PSH(\Omega)$ with $v\ge u$ and $v=u$ outside a compact subset of $\Omega$ satisfies $v=u$. The restriction of a maximal psh function to an open subset is maximal: a competing function equal to $u$ outside a compact subset extends by $u$ to the original domain.

On domains in $\mathbb C^2$, B\l ocki's results give
\begin{equation}
\Eloc=\PSH\cap W^{1,2}_{\mathrm{loc}},\qquad
f\in\Eloc:\quad f\text{ maximal}\ \Longleftrightarrow\ (\ddc f)^2=0;
\label{eq:blocki}
\end{equation}
see \cite[Theorems 1.1, 4.1 and Proposition 2.2]{Blocki04}, with local normalization for $\mathcal E$.

\begin{lemma}\label{lem:CDresidual}
Let $u$ be psh on a domain in $\mathbb C^2$ and satisfy \eqref{eq:CD} locally. Its Siu divisorial part $S_u$ is a locally finite effective real divisor. If $h_j$ are local psh potentials of $S_u$, then
\[
u=h_j+f_j,\qquad f_j\in\Eloc.
\]
\end{lemma}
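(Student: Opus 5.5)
The strategy is to show that, near each point, the divisorial part of $u$ coincides with that of $\phi=\alpha\log\|f\|$. Once this is known, $u-h_j$ is bounded below by a function in $\mathcal E$, and Lemma~\ref{lem:classes} gives $f_j\in\Eloc$. Fix a point and a neighborhood $D$ on which \eqref{eq:CD} holds with $\psi\in\mathcal E(D)$ and $\phi=\alpha\log\|f\|$. The case $\phi=0$ is the degenerate case of the argument below: there $S_u=0$, and $u\ge\psi$ gives $u\in\Eloc$ directly.

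\textbf{Step 1: $\psi$ has no divisorial part.} By \eqref{eq:blocki}, $\psi\in W^{1,2}_{\mathrm{loc}}$. A psh function with positive generic Lelong number $c$ along an irreducible curve $Z$ behaves like $c\log|g|$ transversally. Concretely, $\psi=c\log|g|+w$ with $w$ psh, and slicing shows that $|\nabla\psi|^2$ is not integrable near regular points of $Z$, since $\int|z_1|^{-2}$ diverges in real dimension two. I would make this precise either by restricting to generic complex lines transverse to $Z$ and using Fubini, or by citing Błocki's description directly. It follows that the upper level sets $\{\nu(\psi,\cdot)\ge c\}$, which are analytic by Siu's theorem, are discrete for every $c>0$.

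\textbf{Step 2: identify the generic Lelong numbers.} Let $Z$ be an irreducible curve in $D$ and take a generic point $x\in Z$, avoiding the discrete set from Step~1. Lelong numbers are monotone under $\le$ and additive for sums of psh functions. From $\phi+\psi\le u$ we get $\nu(u,x)\le\nu(\phi,x)+\nu(\psi,x)=\nu(\phi,x)$. From $u+\psi\le\phi$ we get $\nu(\phi,x)\le\nu(u,x)+\nu(\psi,x)=\nu(u,x)$. Hence $\nu_Z(u)=\nu_Z(\phi)$. Now write $f=g\,f'$ locally, where $g$ is the greatest common divisor of the components of $f$; we may shrink $D$ so that $\mathcal O$ is factorial there. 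Then the common zero set of $f'$ is discrete, and the divisorial part of $\ddc\phi$ is $\alpha[\operatorname{div}g]$. Consequently $S_u|_D=\alpha\,[\operatorname{div} g]$ is a finite sum on $D$, which gives local finiteness of $S_u$.

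\textbf{Step 3: the residual potential.} Take $h=\alpha\log|g|$ as a potential of $S_u$ on $D$. Any other choice $h_j$ differs from it by a pluriharmonic function, which is harmless by Lemma~\ref{lem:classes}. The current $\ddc u-S_u=T_u$ is positive by Siu's decomposition, so the psh representative of $u-h$ exists. Outside $\{g=0\}$ we have $u-h\ge\alpha\log\|f'\|+\psi$, and this persists for the psh representative since both sides are psh and the inequality holds almost everywhere.

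It remains to check that $\alpha\log\|f'\|\in\Eloc$. This is a psh function that is locally bounded outside a discrete set. By the classical result (Demailly's theorem, or Błocki's characterization in dimension two), it lies in the domain of definition, so it belongs to $W^{1,2}_{\mathrm{loc}}=\Eloc$ by \eqref{eq:blocki}. Since $\mathcal E$ is a convex cone, $\alpha\log\|f'\|+\psi\in\mathcal E$ after subtracting a constant on a smaller hyperconvex ball. Lemma~\ref{lem:classes} then yields $u-h\in\Eloc$, hence $f_j\in\Eloc$.

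The main obstacle is Step~1, namely rigorously excluding divisorial Lelong mass for Cegrell functions. I would first try the slicing and $W^{1,2}$ argument. As a fallback, I would use the fact that $[\{g=0\}]\wedge[\{g=0\}]$ is undefined, i.e. that $c\log|g|\notin\mathcal E$ and that $\mathcal E$ is stable under passing to larger functions.
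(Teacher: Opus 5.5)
Your proposal is correct and follows essentially the same route as the paper. You rule out divisorial Lelong mass for $\mathcal E$-functions via $W^{1,2}_{\mathrm{loc}}$; your fallback monotonicity argument is exactly the paper's. You split off the common divisorial factor of $f$ so that the remaining part has isolated common zeros, compare Lelong numbers using both inequalities in \eqref{eq:CD}, and conclude from the lower bound $u-h\ge\alpha\log\|f'\|+\psi$ and Lemma~\ref{lem:classes}. The only minor difference is that you obtain $\alpha\log\|f'\|\in\Eloc$ from Demailly's theorem on isolated singularities together with B\l ocki's characterization. The paper instead uses the \L ojasiewicz bound $\log\|g\|\ge N\log|z-z_0|-C$, the fact that $\log|z-z_0|\in W^{1,2}_{\mathrm{loc}}(\mathbb C^2)$, and monotonicity.
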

\begin{proof}
First, a function $\psi\in\Eloc$ has zero generic Lelong number along every divisor. Otherwise, near a smooth point of such a divisor, the Siu decomposition gives
\[
\psi=\alpha\log|z_1|+v,\qquad \alpha>0,
\]
with $v$ psh. On a smaller neighborhood, $v\le C$, so $\psi-C\le\alpha\log|z_1|$. Lemma~\ref{lem:classes} and \eqref{eq:blocki} would imply $\log|z_1|\in W^{1,2}_{\mathrm{loc}}$, contrary to the divergence of $\int_0^\epsilon dt/t$ in the transverse variable.

Consider $\phi=\alpha\log\|f\|$ near a point $z_0$. Since the local ring of a smooth surface is factorial, there is a neighborhood of $z_0$ on which $f=h_0g$, where the components of $g$ have no common divisorial factor. On a sufficiently small neighborhood, $g$ has no zero or has $z_0$ as its only zero. In the latter case the local \L ojasiewicz inequality gives
\[
\alpha\log\|g(z)\|\ge \alpha N\log|z-z_0|-C.
\]
Since $\log|z-z_0|\in W^{1,2}_{\mathrm{loc}}(\mathbb C^2)$, Lemma~\ref{lem:classes} gives $\psi_0=\alpha\log\|g\|\in\Eloc$. Thus
\[
\phi=h+\psi_0,\qquad h=\alpha\log|h_0|,
\]
where $\ddc h$ is an effective divisor. If $\phi=0$, take $h=\psi_0=0$.

For each irreducible divisor $Z$, the two inequalities in \eqref{eq:CD} and monotonicity of Lelong numbers give
\[
\nu_Z(\phi)+\nu_Z(\psi)\ge\nu_Z(u),\qquad
\nu_Z(u)+\nu_Z(\psi)\ge\nu_Z(\phi).
\]
The first paragraph shows that $\nu_Z(\psi)=\nu_Z(\psi_0)=0$. Hence $\nu_Z(u)=\nu_Z(\phi)$, and the divisorial part of $\ddc u$ is precisely $\ddc h$ locally. This also proves local finiteness; see \cite{Siu74} for the Siu decomposition.

The positivity $\ddc u-S_u\ge0$ gives a psh representative $f_j$ of the distribution $u-h_j$, defined using the locally integrable functions $u$ and $h_j$. Equality almost everywhere gives $u=h_j+f_j$ everywhere. The distribution $h-h_j$ has a smooth pluriharmonic representative $\psi_j$. On a smaller neighborhood, $\psi_j\ge-C$ for some constant $C$. Outside the pole sets of $h$ and $h_j$, the lower inequality in \eqref{eq:CD} gives
\[
f_j\ge\psi_0+\psi+\psi_j\ge\psi_0+\psi-C.
\]
The inequality $f_j\ge\psi_0+\psi-C$ therefore holds almost everywhere, and hence everywhere because both sides are psh. The right side belongs to $\Eloc$, so Lemma~\ref{lem:classes} gives $f_j\in\Eloc$.
\end{proof}

\begin{lemma}\label{lem:divcompetitor}
Let $D\Subset\Omega\subset\mathbb C^2$, let $u\in\PSH(\Omega)$ satisfy \eqref{eq:CD} locally, and suppose $v\in\PSH(D)$ satisfies $v\ge u$ and $v=u$ on $D\setminus K$, where $K\Subset D$. Then the divisorial parts of $\ddc v$ and $\ddc u|_D$ coincide. In particular, $\ddc v\ge S_u|_D$.
\end{lemma}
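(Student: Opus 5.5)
Fix an irreducible analytic hypersurface $Z\subset D$. The plan is to show $\nu_Z(v)=\nu_Z(u)$ for every such $Z$, where $\nu_Z$ denotes the generic Lelong number along $Z$. The Siu divisorial part is determined exactly by these numbers, so this gives the first assertion. The second assertion then follows, because $\ddc v$ dominates its own divisorial part by the Siu decomposition.

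One inequality is immediate. Since $v\ge u$ on $D$, monotonicity of Lelong numbers gives $\nu_Z(v)\le\nu_Z(u)$ for every $Z$.

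For the reverse inequality, split into two cases.

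\emph{Case $Z\not\subset K$.} Then $Z$ meets the open set $D\setminus K$, where $v=u$. The generic Lelong number along an irreducible $Z$ equals the Lelong number at a generic point of $Z$, and such points may be taken in $Z\setminus K$. Hence $\nu_Z(v)=\nu_Z(u)$.

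\emph{Case $Z\subset K$.} Then $Z$ is a compact analytic subset of $D\subset\mathbb C^2$ of positive dimension. By the maximum principle applied to the coordinate functions on $Z$, no such set exists, so this case is empty. I would state this explicitly, since it is the only place where the geometry of $\mathbb C^2$, rather than a general manifold, enters.

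The subtle point is whether the irreducible components of the two divisorial parts can differ. The possible divisors are global irreducible hypersurfaces of $D$, not local germs, so the generic-point argument is legitimate. Still, I would check that an irreducible $Z$ with $Z\setminus K\neq\varnothing$ has a dense subset of generic points there. This holds because $Z\cap K$ is a proper analytic-compact subset and the set of nongeneric points, $\{x\in Z:\nu(v,x)>\nu_Z(v)\}$, is a countable union of proper analytic subsets of $Z$ (Siu). Choose a point of $Z\setminus K$ outside these countably many proper subsets for both $u$ and $v$. At that point $\nu_Z(v)=\nu(v,x)=\nu(u,x)=\nu_Z(u)$, using that $u=v$ on a neighborhood of $x$.

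This gives equality of all generic Lelong numbers, hence $S_v=S_u|_D$. The inequality $\ddc v\ge S_v=S_u|_D$ is then the positivity of the residual current in Siu's decomposition of $\ddc v$. Note that the hypothesis \eqref{eq:CD} is not really needed for this lemma beyond guaranteeing that $S_u$ is locally finite, which is given by Lemma~\ref{lem:CDresidual}.
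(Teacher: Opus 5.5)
Your proposal is correct and follows essentially the same route as the paper: $v\ge u$ gives $\nu_Y(v)\le\nu_Y(u)$ for every divisor $Y$, the absence of compact curves in $D\subset\mathbb C^2$ rules out $Z\subset K$, and a point of $Z\setminus K$ generic for both $u$ and $v$ gives $\nu_Z(v)=\nu_Z(u)$. The only slip is calling $Z\cap K$ analytic. What you need is just that $Z\setminus\overline K$ is a nonempty relatively open subset of $Z$ (replace $K$ by its closure), and such a set cannot be covered by the countable union of proper analytic subsets of the curve $Z$.
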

\begin{proof}
Let $Z$ be an irreducible component of $S_u|_D$. Then $Z$ is a closed analytic subset of $D$ of pure dimension one. If $Z\subset K$, it would be compact, contradicting the absence of compact positive-dimensional analytic subsets in a domain in $\mathbb C^2$. Thus $Z\setminus K$ is a nonempty relatively open subset of $Z$. On its smooth locus choose a point generic for the divisorial Lelong numbers of both currents. Such a point exists because the exceptional sets are countable unions of proper analytic subsets of $Z$. Equality of $u$ and $v$ near this point yields $\nu_Z(v)=\nu_Z(u)$. The inequality $v\ge u$ also gives $\nu_Y(v)\le\nu_Y(u)$ for every divisor $Y$, so $\ddc v$ has no additional divisorial component. The assertion follows from the Siu decomposition.
\end{proof}

\begin{proof}[Proof of Theorem~\ref{th:CD}]
Lemma~\ref{lem:CDresidual} gives $u=h_j+f_j$ with $f_j\in\Eloc(D_j)$ and $\ddc f_j=T_u|_{D_j}$. On an overlap, the distribution $h_l-h_j$ has a smooth pluriharmonic representative $\psi_{jl}$, and
\[
f_j=f_l+\psi_{jl}.
\]
Lemma~\ref{lem:classes} therefore shows that $(\ddc f_j)^2=(\ddc f_l)^2$ on overlaps. These positive locally finite measures define the global measure in \eqref{eq:residualMA}. The same argument on a common refinement proves independence of the cover and of the chosen potentials of $S_u$.

\emph{(i)$\Rightarrow$(ii).} The restriction of a maximal psh function to an open subset is maximal, by the definition recalled above.

\emph{(ii)$\Rightarrow$(iii).} Fix $j$ and a point of $D_j$. Choose a ball $D_0\Subset D_j$ containing that point on which $u$ is maximal. If $g\in\PSH(D_0)$ satisfies $g\ge f_j$ and $g=f_j$ outside a compact subset of $D_0$, then $h_j+g$ is a psh compact perturbation of $u=h_j+f_j$ from above on $D_0$. Maximality of $u$ gives
\[
h_j+g=h_j+f_j\quad\text{on }D_0.
\]
Since $h_j$ is finite almost everywhere, $g=f_j$ almost everywhere and hence everywhere. Thus $f_j$ is maximal on $D_0$. By \eqref{eq:blocki}, $(\ddc f_j)^2=0$ on $D_0$. Such balls cover every $D_j$, so $T_u^2=0$ on $\Omega$.

\emph{(iii)$\Rightarrow$(i).} Let $v\in\PSH(\Omega)$ satisfy $v\ge u$ and $v=u$ outside $K\Subset\Omega$. Choose a relatively compact domain $D\Subset\Omega$ containing $K$, and choose local psh potentials $h_j$ of $S_u|_D$ on coordinate domains $D_j$. By Lemma~\ref{lem:CDresidual} and (iii),
\[
u=h_j+f_j,\qquad f_j\in\Eloc,\qquad(\ddc f_j)^2=0.
\]
Lemma~\ref{lem:divcompetitor} shows that $v$ has the same divisorial part as $u$ on $D$. Consequently, the distributions $v-h_j$ have psh representatives $g_j$, with $v=h_j+g_j$. The inequalities $g_j\ge f_j$ hold first almost everywhere and then everywhere; Lemma~\ref{lem:classes} gives $g_j\in\Eloc$. Similarly, $g_j=f_j$ on $D_j\setminus K$. On overlaps,
\[
f_j=f_l+\psi_{jl},\qquad g_j=g_l+\psi_{jl},
\]
where $\psi_{jl}$ is the smooth pluriharmonic representative of $h_l-h_j$. Thus $f,g$ are families of local $\mathcal E$-potentials with the same transition functions, $f\le g$, and $f=g$ outside the common compact set $K$. The current represented by $\ddc f_j$ has zero square. The function $\rho(z)=|z|^2$ satisfies \eqref{eq:rho} on $D$. Hence Corollary~\ref{cor:zero} gives $f=g$. Consequently $v=u$ on $D$, and hence on $\Omega$. The definition of maximality proves that $u$ is maximal.
\end{proof}

Dimension two is used in Lemma~\ref{lem:CDresidual} to prove that the functions $f_j$ belong to $\Eloc$. In the factorization $f=h_0g$, the zero set of $g$ is discrete, so $\alpha\log\|g\|$ has at most isolated singularities and belongs to $\Eloc$. The comparison argument is valid in every dimension.

\medskip
\noindent\textbf{Acknowledgements.} This work was completed while the first author was visiting the Department of Mathematics at the National University of Singapore as a Visiting Senior Research Fellow under the Singapore Academies Southeast Asia Fellowship (SASEAF) Programme, within the project ``Math, Sobolev Space'' (Grant No. E-146-00-0039-01). The first author would like to express his sincere gratitude to the Department of Mathematics at NUS for its warm hospitality and excellent working environment. He also gratefully acknowledges the Singapore National Academy of Science (SNAS) and the National Research Foundation (NRF), Singapore, for their financial support through the SASEAF Programme.

\end{document}